\newtheorem{theorem}{Theorem}[section]
\newtheorem{definition}[theorem]{Definition}
\newtheorem{remark}[theorem]{Remark}
\newtheorem{example}[theorem]{Example}
\newtheorem*{theorem*}{\it Theorem}
\numberwithin{equation}{section}
\def\Xint#1{\mathchoice
   {\XXint\displaystyle\textstyle{#1}}%
   {\XXint\textstyle\scriptstyle{#1}}%
   {\XXint\scriptstyle\scriptscriptstyle{#1}}%
   {\XXint\scriptscriptstyle\scriptscriptstyle{#1}}%
   \!\int}
\def\XXint#1#2#3{{\setbox0=\hbox{$#1{#2#3}{\int}$}
     \vcenter{\hbox{$#2#3$}}\kern-.5\wd0}}
\def\dashint{\Xint-}
\def\1{\raisebox{1pt}{\rm{$\chi$}}}
\newcommand{\twopartdef}[4]
{
\left\{
		\begin{array}{ll}
			#1 & #2 \\
			#3 & #4
		\end{array}
	\right.
}
\newcommand{\threepartdef}[6]
{
	\left\{
		\begin{array}{lll}
			#1 & #2 \\
			#3 & #4 \\
			#5 & #6
		\end{array}
	\right.
}
\title{A metric counterpart of the Gu-Yung formula}
\author[S. Buccheri and W. G\'{o}rny]{Stefano Buccheri and Wojciech G\'{o}rny}
\address{S. Buccheri: Faculty of Mathematics, Universit\"at Wien, Oskar-Morgerstern-Platz 1, 1090 Vienna, Austria
\hfill\break\indent
{\tt stefano.buccheri@univie.ac.at }}
\address{W. G\'{o}rny: Faculty of Mathematics, Universit\"at Wien, Oskar-Morgerstern-Platz 1, 1090 Vienna, Austria; Faculty of Mathematics, Informatics and Mechanics, University of Warsaw, Banacha 2, 02-097 Warsaw, Poland
\hfill\break\indent
{\tt  wojciech.gorny@univie.ac.at }
}
\keywords{Fractional Sobolev norm, Metric measure space, Marcinkiewicz space, BBM formula, Maz'ya-Shaposhnikova formula \\
\indent 2020 {\it Mathematics Subject Classification:} 26D10, 26A33, 28C15, 35A23, 46E30.}
\date{\today}
\begin{document}
\maketitle
\begin{abstract}
In this note we consider a generalisation to the metric setting of the recent work~\cite{GY}. In particular, we show that under relatively weak conditions on a metric measure space $(X,d,\nu)$, it holds true that
\[
\bigg[ \frac{u(x)-u(y)}{d(x,y)^{\frac{s}{p}}} \bigg]_{L^p_w(X \times X, \nu \otimes \nu)} \approx  \| u \|_{L^p(X,\nu)},
\]
where $s$ is a generalised dimension associated to $X$ and $[\cdot]_{L^p_w}$ is the weak Lebesgue norm. We provide some counterexamples which show that our assumptions are optimal. 
\end{abstract}
\section{Introduction}
Given $p\ge1$ and $\alpha\in(0,1)$, let us start by recalling the well-known definition of Gagliardo seminorm of a function $u:\mathbb{R}^N\to \mathbb{R}$ in the Euclidean setting
\begin{equation}\label{gagliardo}
[u]_{p,\alpha}= \int_{\mathbb{R}^N} \int_{\mathbb{R}^N} \frac{|u(x) - u(y)|^p}{|x-y|^{N + \alpha p}} \, dx \, dy=\left\|\frac{u(x) - u(y)}{|x-y|^{\frac Np + \alpha}}\right\|_{L^p(\mathbb{R}^N\times\mathbb{R}^N)}^p.
\end{equation}
Roughly speaking, if $[u]_{p,\alpha}<\infty$ we can say that the derivatives of $u$ of order $\alpha$ are $L^p$-integrable. However, this heuristic interpretation fails at the endpoint values $\alpha=1$ and $\alpha=0$. Indeed, on one side, $[u]_{p,1}\neq \|\nabla u\|_p$ and actually $[u]_{p,1}<\infty$ if and only if $u$ is constant (see the seminal paper~\cite{brezis}). On the other side, $[u]_{p,0}\neq \|u\|_p$ and the energy $[u]_{p,0}$ is connected to the so-called logarithmic Laplacian (see for instance \cite{chen}).

A possible way to correct this mismatch is to consider suitable renomalisations of the Gagliardo seminorm. Concerning the case when $\alpha \to 1^-$, Bourgain, Brezis and Mironescu proved that (see \cite{BBM1} and~\cite{BBM2}) for $u\in W^{1,p}(\Omega)$ it holds that
\begin{equation}\label{BBM}
\lim_{\alpha \rightarrow 1^-} (1-\alpha) \int_{\Omega} \int_{\Omega} \frac{|u(x) - u(y)|^p}{|x-y|^{N + \alpha p}} \, dx \, dy = K_{p,N} \| \nabla u \|^p_{L^p(\Omega)},
\end{equation}
where $\Omega\subset\mathbb{R}^N$ is a bounded smooth domain, and
$$K_{p,N}=\int_{\mathbb{S}^{N-1}}|e\cdot\omega|^p \, d\omega,$$
where $e$ is any unit vector in $\mathbb{R}^N$. Furthermore, this result serves also as a characterisation of Sobolev spaces, in the sense that a function $u$ lies in $W^{1,p}(\Omega)$ if and only if the left-hand side of \eqref{BBM} is finite. Formula \eqref{BBM} has been source of inspiration for many other contributions; without the intention of being exhaustive, let us mention some classical extensions of this result in the Euclidean case due to D\'{a}vila~\cite{Davila}, Ponce~\cite{Ponce}, Nguyen~\cite{Nguyen} and Leoni-Spector~\cite{Leoni}, as well as the more recent papers \cite{BCCS, AB, DDP}.

On the other hand, to relate the fractional seminorm with the $L^p$ norm as $\alpha \rightarrow 0^+$, Maz'ya and Shaposhnikova shown in \cite{MS} that whenever
$$u\in\bigcup_{s\in(0,1)} W^{s,p}_0(\mathbb{R}^N),$$
where $W^{s,p}_0(\mathbb{R}^N)$ denotes the completion of smooth functions with compact support in the fractional Sobolev norm, it holds that
\begin{equation}\label{MazSha}
\lim_{\alpha \rightarrow 0^+} \alpha \int_{\mathbb{R}^N} \int_{\mathbb{R}^N} \frac{|u(x) - u(y)|^p}{|x-y|^{N + \alpha p}} \, dx \, dy = \frac{2\omega_N}{pN} \| u \|^p_{L^p(\mathbb{R}^N)},
\end{equation}
Formula \eqref{MazSha} has been later generalised for anisotropic norms on $\mathbb{R}^N$ in \cite{Lud}.

A further intriguing step in this line of investigation has been done in the recent paper \cite{BVY}, where the authors consider the weak $L^p$ norm of the difference quotient in \eqref{gagliardo}, namely
\[
\left[\frac{u(x) - u(y)}{|x-y|^{\frac Np + \alpha}}\right]_{L^p_w(\mathbb{R}^N\times\mathbb{R}^N)}= \sup_{\lambda > 0} \lambda^p \mathcal{L}^{2N}\left(\bigg\{(x,y)\in \mathbb{R}^N \times \mathbb{R}^N: \ \frac{|u(x)-u(y)|}{|x-y|^{\frac{N}{p}+\alpha}} \geq \lambda \bigg\}\right).
\]
The weak $L^p$ space $L^p_w$, also known in the literature as Marcinkiewicz space $M^p$ or the Lorentz space $L^{p,\infty}$, is a slightly larger space than the ordinary Lebesgue space $L^p$.
The main result of~\cite{BVY} shows that there exist two positive constants $c_1$ and $c_2$ depending only on the dimension such that for all $u\in C^{\infty}_c(\mathbb{R}^N)$
\[
c_1\|\nabla u\|_{L^p(\mathbb{R}^N)}\le \left[\frac{u(x) - u(y)}{|x-y|^{\frac Np + 1}}\right]_{L^p_w(\mathbb{R}^N\times\mathbb{R}^N)}\le c_2 \|\nabla u\|_{L^p(\mathbb{R}^N)}.
\]
and moreover
\[
\lim_{\lambda\to\infty} \lambda^p \mathcal{L}^{2N}\left(\bigg\{(x,y)\in \mathbb{R}^N \times \mathbb{R}^N: \ \frac{|u(x)-u(y)|}{|x-y|^{\frac{N}{p}+\alpha}} \geq \lambda \bigg\}\right) =\frac{K_{p,N}}{N}\|\nabla u\|_{L^p(\mathbb{R}^N)}^p,
\]
where
\[
K_{p,N}=\int_{\mathbb{S}^{N-1}}|e\cdot\omega|^p \, d\omega.
\]
Note that this is the same constant as in the classical Bourgain-Brezis-Mironescu formula. The previous formulas were later extended to the case $u\in W^{1,p}(\mathbb{R}^N)$ in \cite{Poli}.

The counterpart of \eqref{MazSha} in this Marcinkiewicz setting arrived soon and in \cite{GY} it has been shown by Gu and Yung that the measure of the set
\begin{equation}
E_{\lambda}= \bigg\{(x,y)\in \mathbb{R}^N \times \mathbb{R}^N: \ \frac{|u(x)-u(y)|}{|x-y|^{\frac{N}{p}}} \geq \lambda \bigg\}
\end{equation}
is related to the $L^p$ norm of $u$ in the following way:
for all $u \in L^p(\mathbb{R}^N)$, we have
\begin{equation}\label{GuYung1}
2\omega_N \| u \|_{L^p(\mathbb{R}^N)}^p \leq \bigg[ \frac{u(x)-u(y)}{|x-y|^{\frac{N}{p}}} \bigg]^p_{L^p_w(\mathbb{R}^{N} \times \mathbb{R}^N)} \leq 2^{p+1}\omega_N \| u \|_{L^p(\mathbb{R}^N)}^p.
\end{equation}
The upper bound was first observed in \cite{DM}. Moreover, the lower bound can be improved in the following way:
\begin{equation}\label{GuYung2}
\lim_{\lambda \rightarrow 0^+} \lambda^{p} \mathcal{L}^{2N}(E_\lambda) = 2\omega_N \| u \|_{L^p(\mathbb{R}^N)}^p,
\end{equation}
where $\omega_N$ is the volume of the unit ball. Later, this result was generalised to anisotropic norms on $\mathbb{R}^N$ in \cite{GH}, a family of weights on $\mathbb{R}^N$ in \cite{BSSY} (see also \cite{PYYZ}), and to Orlicz modulars in place of the $L^p$ norms in \cite{KMS}.

The Bourgain-Brezis-Mironescu and Maz'ya-Shaposhnikova formulas have also inspired numerous works in the setting of metric measure spaces. A first result in this direction was an analogue of formula \eqref{BBM} in Carnot groups, which was proved in \cite{Bar}, relying strongly on their homogeneous structure. Subsequently, characterisations of Sobolev and BV spaces using formulas of type \eqref{BBM} which are valid up to a multiplicative constant were shown in \cite{dMS}, \cite{MMS} and \cite{Mun}. A version of the Bourgain-Brezis-Mironescu formula with an explicit constant was first obtained in \cite{Gor2022JGA} under a condition on the local geometry of the space, i.e., that for $\nu$-a.e. point the Gromov-Hausdorff tangent is a Euclidean space or the Heisenberg group. This result was subsequently generalised in \cite{HP2} to the case of more general mollifiers (see also \cite{LPZ}). Furthermore, some interesting dimensionless results of this type was shown for Carnot groups \cite{GT1,GT2} and RCD spaces \cite{BPP}, and a compactness result inspired by \cite[Theorem 4]{BBM1} in metric measure spaces was shown in the recent preprint \cite{ARB}. On the other hand, the Maz'ya-Shaposhnikova formula was generalised to the metric setting in \cite{HP}. The authors introduced a geometric condition on the metric measure space, called the asymptotic volume ratio, under which a counterpart of formula \eqref{MazSha} is valid for a general family of mollifiers. This line of research, this time for general metric measure spaces which satisfy a Bishop–Gromov type inequality, is continued in the very recent paper \cite{HPXZ}.

The aim of this note it to generalise formulas \eqref{GuYung1} and \eqref{GuYung2} due to Gu-Yung \cite{GY} to the metric setting and understand which assumptions are crucial to the proofs. As we shall see, the main ingredients that we are going to use are Ahlfors regularity and the asymptotic volume ratio (AVR) introduced by Han and Pinamonti in~\cite{HP} (see Section \ref{preliminaries} for the precise definitions). These types of assumptions provide the considered metric measure spaces with a notion of generalised dimension $s$. For instance, Ahlfors regularity reads as
\[
\nu(B(x,r)) \approx r^s
\]
where $B(x,r)$ is a ball of radius $r$ (with center $x \in X$) according to the considered metric and~$\nu$~is the measure the space is endowed with. This generalised dimension $s$ will play the role of the parameter $N$ of formula \eqref{GuYung1}, see Theorems \ref{thm:maintheorem1} and \ref{thm:maintheorem2} in Section \ref{sec:mainresults}. Looking carefully at the proofs of these results, one realizes that it is possible to consider also a more general interaction between the metric structure and the measure. More precisely, we also consider assumptions of the following type
\[
\nu(B(x,r)) \approx f(r),
\]
where $f$ is a convex increasing function with $f(0)=0$. As we shall see in Theorems \ref{thm:cor1} and \ref{thm:cor2}, it~is possible to generalise Gu-Yung formula to this context as well. We also provide some examples that show the optimality of the considered assumptions (see Examples \ref{ex:finitemeasure}, \ref{ex:counterexample}, and \ref{ex:nolimitaslambdagoestozero}).
The considered generality allows us to cover, for instance, Riemannian or sub-Riemannian manifolds (other examples can be found in Section \ref{sec:applications}).

\section{Preliminaries}\label{preliminaries}

Throughout this paper, by a {\it metric measure space} we mean a triple $(\mathbb{X}, d, \nu)$, where $(\mathbb{X}, d)$ is a complete and separable metric space, and $\nu$ is a nonzero non-negative Borel measure in $(\mathbb{X}, d)$ which is finite on bounded sets. Taking $p \geq 1$, our main object of interest is the weak-$L^p$ norm (also called the Marcinkiewicz norm). Given a measure space $(Z,\mu)$ and a $\mu$-measurable function $f: Z \rightarrow \mathbb{R}$, the weak-$L^p$ norm of $f$ (taken to power $p$ for convenience) is defined as
\begin{equation}
[f]^p_{L^p_w(Z,\mu)} = \sup_{\lambda > 0} \, \lambda^p \mu(\{ z \in Z: \, |f(z)| \geq \lambda \}).
\end{equation}
The weak-$L^p$ space is a Banach space which consists of measurable functions for which the above expression is finite, i.e.,
\begin{equation}
L^p_w(Z,\mu) = \{ f: Z \rightarrow \mathbb{R} \quad \mu\mbox{-measurable}: \,\, [f]_{L^p_w(Z,\mu)} < \infty \}.
\end{equation}
The setting to which we apply this definition is $Z = X \times X$ and $\mu = \nu \otimes \nu$. More precisely,
\begin{equation}
[f]^p_{L^p_w(X \times X, \nu \otimes \nu)} = \sup_{\lambda > 0} \, \lambda^p (\nu \otimes \nu)(\{ (x,y) \in X \times X: \, |f(x,y)| \geq \lambda \}).
\end{equation}
We are mainly interested in the study of functions of the form 
$$f(x,y) = \frac{u(x) - u(y)}{d(x,y)^{\frac{s}{p}}},$$
where $u: X \rightarrow \mathbb{R}$ is a measurable function and $s > 0$ plays the role of the dimension of the space. 

Let~us~now present several conditions on the metric measure space which imply existence of a generalised dimension, which will then be used in the statements of our main results in Section \ref{sec:mainresults}. We say that $\nu$ is {\it doubling} if there exists $C_d > 0$ such that, for all $x \in X$ and $r > 0$,
 we have
\begin{equation}
0 < \nu(B(x,2r)) \leq C_d \nu(B(x,r)).
\end{equation}
To a metric measure space with a doubling measure one can associate a homogeneous dimension $s = \log_2 C_d$ in the following way: every doubling measure $\nu$ is {\it Bishop-Gromov regular} of dimension $s$, i.e., there exists a constant $K \geq 1$ such that
\begin{equation}\label{bigr}
\nu(B(x,R)) \leq K \frac{R^s}{r^s} \nu(B(y,r)),
\end{equation}
for all $x,y \in X$ such that $d(x,y) \leq R$ and all $r \in (0,R)$. Moreover, every metric space $(X,d)$ equipped with a doubling measure $\nu$ is proper (i.e., bounded closed subsets are compact), the measure $\nu$ is supported on the whole space $X$, and it assigns zero measure to spheres.

A second condition we consider is Ahlfors regularity. We say that $\nu$ is {\it Ahlfors regular} if there exist $C_a, C_A > 0$ and $s > 0$ such that for all $x \in X$ and $r > 0$
\begin{equation}\label{ahlfreg}
C_a r^s \leq \nu(B(x,r)) \leq C_A r^s.
\end{equation}
It is a stronger assumption than the previous one and it is easy to see that it implies the doubling condition. This condition can also be used in several different variants: the measure $\nu$ is {\it upper Ahlfors regular}, if only the upper bound in \eqref{ahlfreg} holds, and {\it lower Ahlfors regular} if only the lower bound in \eqref{ahlfreg} holds. Furthermore, one may also consider a more restrictive setting, in which we require that $C_a = C_A$ and inequality \eqref{ahlfreg} is in fact an equality (as is the case for Euclidean spaces and Carnot groups).

Finally, we present a condition which is independent of the ones presented before, called the asymptotic volume ratio. It was introduced by Han and Pinamonti in \cite{HP} and concerns the asymptotic behaviour of the volume of large balls.

\begin{definition}\label{dfn:avr}
We say that the metric measure space $(X,d,\nu)$ admits an asymptotic volume ratio with dimension $s \in (0,\infty)$, if there exists a limit
\begin{equation}\label{avr}
{\rm AVR} = \lim_{r \rightarrow \infty} \frac{\nu(B(x_0,r))}{r^s} \in (0,+\infty)
\end{equation}
for some (equivalently: all) $x_0 \in X$. This definition does not depend on the choice of $x_0$ (see \cite{HP}).
\end{definition}

This condition was first introduced in order to study a metric analogue of the classical Maz'ya-Shaposhnikova formula in metric measure spaces, making it a natural assumption in our setting. Examples of spaces which satisfy some of the above assumptions, adapted to the settings of our main results (Theorems \ref{thm:maintheorem1} and \ref{thm:maintheorem2}), are given in Section \ref{sec:applications} together with a discussion on how to apply our results.

\section{Main results}\label{sec:mainresults}

In this section, we show our main results, which are Theorems \ref{thm:maintheorem1} and \ref{thm:maintheorem2}. In fact, they may be viewed as two variants of the same result, closely connected but with separate sets of assumptions which require some modification of the proofs. Later, we present how to generalise them to a more general setting in Theorems \ref{thm:cor1} and \ref{thm:cor2}. We comment on the optimality of the assumption at a later point, see Examples \ref{ex:finitemeasure}, \ref{ex:counterexample}, and \ref{ex:nolimitaslambdagoestozero}.

\begin{theorem}\label{thm:maintheorem1}
Suppose that $\nu(X) = + \infty$. Assume that $\nu$ is upper Ahlfors regular, namely there exists $C_A > 0$ such that
\begin{equation}\label{semiahlfor}
\nu(B(x,r)) \leq C_A r^s\ \ \ \forall \, x\in X, \, r>0.
\end{equation}
Furthermore, we require that its asymptotic value ratio ${\rm AVR}$ (see Definition \ref{dfn:avr}) corresponding to the exponent $s$ is finite, and that the measure of balls with a fixed center is continuous as a function of radius, i.e.,
\begin{equation}\label{eq:continuity}
\mbox{the map } r \mapsto \nu(B(x,r)) \mbox{ is continuous for all } x \in X.
\end{equation}
Then, there exist constants $c_1, c_2, c_3 > 0$ such that for all $p \in [1,\infty)$ and $u \in L^p(X,\nu)$ we have
\begin{equation}\label{eq:bounds}
c_1 \| u \|_{L^p(X,\nu)}^p \leq \bigg[ \frac{u(x)-u(y)}{d(x,y)^{\frac{s}{p}}} \bigg]^p_{L^p_w(X \times X, \nu \otimes \nu)} \leq c_2 \| u \|_{L^p(X,\nu)}^p.
\end{equation}
Moreover, the lower bound can be improved in the following way. If we denote
\begin{equation}
E_{\lambda}= \bigg\{(x,y)\in X \times X: \, \frac{|u(x)-u(y)|}{d(x,y)^{\frac{s}{p}}} \geq \lambda \bigg\}
\end{equation}
we have
\begin{equation}\label{eq:limit}
\lim_{\lambda \rightarrow 0^+} \lambda^{p} (\nu \otimes \nu) (E_\lambda) = c_3 \| u \|_{L^p(X,\nu)}^p.
\end{equation}
We may take $c_1 = c_3 = 2 \cdot {\rm AVR}$ and $c_2 = 2^{p+1} C_A$. 
\end{theorem}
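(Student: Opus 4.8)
The plan is to follow the structure of the Euclidean Gu--Yung argument, replacing the explicit polar-coordinate computations with the volume bounds supplied by upper Ahlfors regularity and the AVR. The key observation is that for fixed $x$, the measure of the ``bad slice''
\[
E_\lambda^x = \Big\{ y \in X : \frac{|u(x)-u(y)|}{d(x,y)^{s/p}} \geq \lambda \Big\}
\]
can be sandwiched between multiples of $|u(x)|^p/\lambda^p$, which upon integrating in $x$ and applying Fubini yields \eqref{eq:bounds}. For the upper bound, note that if $y \in E_\lambda^x$ then either $|u(x)| \geq \lambda d(x,y)^{s/p}/2$ or $|u(y)| \geq \lambda d(x,y)^{s/p}/2$; in the first case $d(x,y) \leq (2|u(x)|/\lambda)^{p/s}$, so $y$ lies in a ball of that radius around $x$, whose $\nu$-measure is at most $C_A (2|u(x)|/\lambda)^p = 2^p C_A |u(x)|^p / \lambda^p$ by \eqref{semiahlfor}. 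Treating the symmetric case and integrating in $x$ gives $\lambda^p (\nu\otimes\nu)(E_\lambda) \leq 2^{p+1} C_A \|u\|_{L^p}^p$, which is uniform in $\lambda$, hence the stated $c_2$.

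For the lower bound and the limit \eqref{eq:limit}, I would first establish the result for a dense subclass --- say bounded functions with bounded support, or simple functions --- and then pass to the general case by a density/continuity argument in $L^p$, using that both $u \mapsto \|u\|_{L^p}^p$ and $u \mapsto [\,\cdot\,]_{L^p_w}^p$ (the latter being a norm on a Banach space, so continuous, and controlled by $\|\cdot\|_{L^p}^p$ via the already-proven upper bound) behave well under limits; the $\liminf$/$\limsup$ of $\lambda^p(\nu\otimes\nu)(E_\lambda)$ must be handled with care since the weak-$L^p$ functional is only a quasi-norm in the triangle-inequality sense, but the uniform bound from the upper estimate controls the error terms. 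For a fixed nice $u$ and small $\lambda$, the dominant contribution to $E_\lambda$ comes from pairs $(x,y)$ with $d(x,y)$ large (of order $(|u(x)|/\lambda)^{p/s} \to \infty$), where the asymptotics of $\nu(B(x,r))$ are governed precisely by the AVR: $\nu(B(x,r)) \sim {\rm AVR}\cdot r^s$. The continuity assumption \eqref{eq:continuity} ensures that $\nu(E_\lambda^x) = \nu(\{y : d(x,y) \leq (|u(x)|/\lambda)^{p/s}, \ |u(x)-u(y)| \geq \lambda d(x,y)^{s/p}\})$ can be compared, for small $\lambda$, to $\nu(B(x, (|u(x)|/\lambda)^{p/s}))$ up to lower-order terms, because $u(y)$ is negligible relative to $u(x)$ on most of that ball when $u$ is bounded with bounded support and $x$ is outside that support; this yields the factor ${\rm AVR} \cdot (|u(x)|/\lambda)^p$. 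Accounting for the symmetric roles of $x$ and $y$ produces the factor $2$, giving $\lim_{\lambda\to 0^+}\lambda^p(\nu\otimes\nu)(E_\lambda) = 2\,{\rm AVR}\,\|u\|_{L^p}^p$, and since this limit is in particular a lower bound we also get $c_1 = 2\,{\rm AVR}$.

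The main obstacle I anticipate is making the heuristic ``$u(y)$ is negligible on most of $B(x,r)$ for large $r$'' rigorous and uniform enough to interchange limit and integral. Concretely: one must show that for a dense class of $u$, as $\lambda \to 0^+$ the quantity $\lambda^p \nu(E_\lambda^x)$ converges pointwise to ${\rm AVR}\cdot|u(x)|^p$ (or the symmetrised version) and is dominated by an integrable function of $x$ uniformly in $\lambda$ --- the domination being exactly the upper-bound estimate above --- so that dominated convergence applies after Fubini. A subtlety specific to the metric (as opposed to Euclidean) setting is that the convergence $\nu(B(x,r))/r^s \to {\rm AVR}$ is not uniform in $x$, so one cannot simply quote a single rate; instead the bounded-support reduction confines the relevant $x$ to where, outside a large ball, $u$ vanishes, and the AVR limit is then applied with $x$ ranging over that support (a bounded set) or one invokes the center-independence of the AVR together with \eqref{eq:continuity}. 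The upper Ahlfors bound \eqref{semiahlfor} is what keeps the measures of the ``annular error'' regions under control. Once the limit is known on the dense class, an $\varepsilon/3$-type argument using the Banach-space structure of $L^p_w$ and the uniform upper bound extends \eqref{eq:limit} to all of $L^p(X,\nu)$.
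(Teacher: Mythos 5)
Your strategy matches the paper's proof essentially step for step: the same $2^{p+1}C_A$ upper bound via the half-$\lambda$ splitting and Fubini, the same reduction to boundedly supported $u$ where the slice is exactly a ball up to a correction of finite measure $\nu(B(x_0,R))$ (so that $u$ vanishes identically, not just negligibly, off the support), dominated convergence against $C_A|u(x)|^p$ with the AVR supplying the pointwise limit, the factor $2$ from the symmetric roles of $x$ and $y$, and the $(1\pm\sigma)$-threshold splitting combined with the uniform upper bound to pass from $u_R$ to general $u\in L^p$. The only point you leave implicit --- how exactly the factor $2$ is bookkept without the domination failing on the infinite-measure complement of the support (the paper halves $E_\lambda$ by comparing $d(x_0,x)$ with $d(x_0,y)$, which is where assumption \eqref{eq:continuity} enters) --- is a matter of organization rather than a gap.
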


Let us note that assumption \eqref{eq:continuity}, while a bit technical, is satisfied for instance when the measure $\nu$ is doubling. Moreover, as we will see in the proof, for the upper bound in \eqref{eq:bounds} only the upper Ahlfors regularity of $\nu$ is needed. The proof roughly follows the outline given in \cite{GY}.

\begin{proof}
\emph{Step 1.} First, we prove the second inequality in \eqref{eq:bounds}. Notice that when $(x,y) \in E_\lambda$, then either $|u(x)| \geq \frac12 \lambda d(x,y)^{\frac{s}{p}}$ or $|u(y)| \geq \frac12 \lambda d(x,y)^{\frac{s}{p}}$. So, by the Fubini theorem we have that
\begin{align}\label{eq:estimatefromabove1}
\lambda^p (\nu \otimes \nu)(E_\lambda) &\leq \lambda^p \int_X \int_X \1_{\{ (x,y): \, |u(x)| \geq \frac12 \lambda d(x,y)^{\frac{s}{p}} \} } \, d\nu(y) \, d\nu(x) \\
&\qquad\qquad\qquad\qquad\qquad + \lambda^p \int_X \int_X \1_{\{ (x,y): \, |u(y)| \geq \frac12 \lambda d(x,y)^{\frac{s}{p}} \} } \, d\nu(x) \, d\nu(y) \\
&= \lambda^p \int_X \int_X \1_{\{ (x,y): \, d(x,y) \leq (\frac{2}{\lambda} |u(x)|)^{\frac{p}{s}} \} } \, d\nu(y) \, d\nu(x) \\
&\qquad\qquad\qquad\qquad\qquad + \lambda^p \int_X \int_X \1_{\{ (x,y): \, d(x,y) \leq (\frac{2}{\lambda} |u(y)|)^{\frac{p}{s}} \} } \, d\nu(x) \, d\nu(y) =: {\rm I} + {\rm II}.
\end{align}
Each of these summands can be estimated  taking advantage of \eqref{semiahlfor}: we have
\begin{align}\label{eq:estimatefromabove2}
{\rm I} = \lambda^p\int_X \int_X &\1_{\{ (x,y): \, d(x,y) \leq (\frac{2}{\lambda} |u(x)|)^{\frac{p}{s}} \} } \, d\nu(y) \, d\nu(x) \\
&\qquad\qquad\qquad = \lambda^p \int_X \nu \bigg(B \bigg (x, \bigg(\frac{2}{\lambda} |u(x)| \bigg)^{\frac{p}{s}} \bigg) \bigg) \, d\nu(x) \leq 2^p C_A \int_X |u(x)|^p \, d\nu(x)
\end{align}
and
\begin{align}\label{eq:estimatefromabove3}
{\rm II} = \lambda^p\int_X \int_X &\1_{\{ (x,y): \, d(x,y) \leq (\frac{2}{\lambda} |u(y)|)^{\frac{p}{s}} \} } \, d\nu(y) \, d\nu(x) \\
&\qquad\qquad\qquad = \lambda^p \int_X \nu \bigg(B \bigg (y, \bigg(\frac{2}{\lambda} |u(y)| \bigg)^{\frac{p}{s}} \bigg) \bigg) \, d\nu(y) \leq 2^p C_A \int_X |u(y)|^p \, d\nu(y).
\end{align}
Collecting the estimates \eqref{eq:estimatefromabove1}, \eqref{eq:estimatefromabove2} and \eqref{eq:estimatefromabove3}, we get
\begin{align}\label{eq:estimatefromabove4}
\lambda^p (\nu \otimes \nu)(E_\lambda) \leq 2^p C_A \int_X |u(x)|^p \, d\nu(x) + 2^p C_A \int_X |u(y)|^p \, d\nu(y) = 2^{p+1} C_A \| u \|_{L^p(X,\nu)}^p.
\end{align}
Taking the supremum, we obtain the inequality from above in \eqref{eq:bounds} with the constant $c_2 = 2^{p+1} C_A$.

{\flushleft \it Step 2.} Now, we prove the first inequality in \eqref{eq:bounds}. This will be done by proving \eqref{eq:limit}, because the limit as $\lambda \rightarrow 0^+$ is clearly smaller or equal to the supremum. Similarly to the proof in \cite{GY}, we first consider the case when $u$ has bounded support.

Throughout the rest of this Step, assume that $u$ is boundedly supported. Fix $x_0 \in X$ and $R > 0$ such that $\mbox{supp } u \subset B(x_0,R)$. Denote
\begin{equation}
H_\lambda = E_\lambda \cap \{ (x,y) \in X \times X: \, d(x_0,y) > d(x_0,x) \}
\end{equation}
and
\begin{equation}
H'_\lambda = E_\lambda \cap \{ (x,y) \in X \times X: \, d(x_0,y) < d(x_0,x) \}.
\end{equation}
Notice that by symmetry we have $(\nu \otimes \nu)(H_\lambda) = (\nu \otimes \nu)(H'_\lambda)$. Also, the set
\begin{equation}
H''_\lambda = E_\lambda \cap \{ (x,y) \in X \times X: \, d(x_0,y) = d(x_0,x) \}
\end{equation}
has zero $(\nu \otimes \nu)$-measure under assumption \eqref{eq:continuity}: boundaries of balls have zero measure, so we integrate~$0$ using the Fubini theorem. Hence,
\begin{equation}\label{eq:twice}
(\nu \otimes \nu)(E_\lambda) = 2(\nu \otimes \nu)(H_\lambda),
\end{equation}
so it suffices to compute the limit with $H_\lambda$ in place of $E_\lambda$.

Now, notice that if $(x,y) \in H_\lambda$, then at least one of $x,y$ lies in the ball $B(x_0,R)$; but, since $d(x_0,x) < d(x_0,y)$, we necessarily have $x \in B(x_0,R)$. For such $x$, denote
\begin{equation}
H_{\lambda,x} := \bigg\{ y \in X: \, d(x_0,y) > d(x_0,x), \, \frac{|u(x)-u(y)|}{d(x,y)^{\frac{s}{p}}} \geq \lambda \bigg\}
\end{equation}
and
\begin{equation}
H_{\lambda,x,R} := H_{\lambda,x} \backslash B(x_0,r) = \bigg\{ y \in X: \, d(x_0,y) \geq R, \, d(x,y) \leq \bigg(\frac{|u(x)-u(y)|}{\lambda} \bigg)^{\frac{p}{s}} \bigg\},
\end{equation}
but since for $y \notin B(x_0,R)$ we have $u(y) = 0$, we have
\begin{equation}
H_{\lambda,x,R} = \bigg\{ y \in X: \, d(x_0,y) \geq R, \, d(x,y) \leq \bigg(\frac{|u(x)|}{\lambda} \bigg)^{\frac{p}{s}} \bigg\}.
\end{equation}
From the two above definitions, we immediately get
\begin{equation}\label{eq:inclusions}
H_{\lambda,x,R} \subset H_{\lambda,x} \subset H_{\lambda,x,R} \cup B(x_0,R).
\end{equation}
Using the Fubini theorem we get
\begin{equation}
(\nu \otimes \nu)(H_\lambda) = \int_{X} \nu(H_{\lambda,x}) \, d\nu(x) = \int_{B(x_0,R)} \nu(H_{\lambda,x}) \, d\nu(x).
\end{equation}
We need to estimate from above and below the measures of $H_{\lambda,x}$ for all $x \in B(x_0,R)$. We will do this using \eqref{eq:inclusions}. From the first inclusion, using \eqref{semiahlfor}, we have
\begin{equation}
\nu(H_{\lambda,x}) \geq \nu(H_{\lambda,x,R}) \geq \nu \bigg(B \bigg(x, \bigg(\frac{|u(x)|}{\lambda} \bigg)^{\frac{p}{s}} \bigg) \bigg) - \nu(B(x_0,R))\geq  \nu \bigg(B \bigg(x, \bigg(\frac{|u(x)|}{\lambda} \bigg)^{\frac{p}{s}} \bigg) \bigg) - C_A R^s.
\end{equation}
Multiplying the above inequalities by $\lambda^p$ and integrating it over $B(x_0,R)$ (which contains the support of $u$) we get
\begin{equation}
\lambda^p (\nu \otimes \nu)(H_\lambda)  \geq \lambda^p \int_X \nu \bigg( B \bigg(x, \bigg(\frac{|u(x)|}{\lambda}\bigg)^{\frac{p}{s}} \bigg) \bigg) \, d\nu(x)- \lambda^p C_A^2 R^{2s}.
\end{equation}
Now, we will take the limit of such an expression as $\lambda\to 0$. In order to do it, notice that the function 
$$f_{\lambda}(x) =\lambda^p \cdot \nu \bigg(B \bigg(x, \bigg(\frac{|u(x)|}{\lambda} \bigg)^{\frac{p}{s}} \bigg) \bigg)$$
satisfies
\begin{equation}
0\leq f_{\lambda}(x)\leq C_A |u(x)|^p \qquad \mbox{and} \qquad f_{\lambda}\to {\rm AVR} \cdot |u(x)|^p \quad \nu-\mbox{a.e} \ \mbox{in } X;
\end{equation}
indeed, by the upper Ahlfors regularity condition \eqref{semiahlfor}
\begin{equation}
f_\lambda(x) = \lambda^p \cdot \nu \bigg(B \bigg(x, \bigg(\frac{|u(x)|}{\lambda} \bigg)^{\frac{p}{s}} \bigg) \bigg) \leq \lambda^p C_A \bigg( \bigg(\frac{|u(x)|}{\lambda} \bigg)^{\frac{p}{s}} \bigg)^s = C_A |u(x)|^p
\end{equation}
and making the change of variables $r= (\frac{|u(x)|}{\lambda})^{\frac{p}{s}}$, by definition of the asymptotic volume ratio
\begin{equation}
\lim_{\lambda \rightarrow 0^+} f_\lambda(x) = \lim_{\lambda \rightarrow 0^+} \lambda^p \cdot \nu \bigg(B \bigg(x, \bigg(\frac{|u(x)|}{\lambda} \bigg)^{\frac{p}{s}} \bigg) \bigg) = \lim_{r \rightarrow \infty} \frac{|u(x)|^p}{r^s} \nu(B(x,r)) = {\rm AVR} \cdot |u(x)|^p.
\end{equation}
Then, the dominated convergence theorem implies that 
\begin{equation}\label{eq:estimatefrombelowavr}
\liminf_{\lambda \rightarrow 0^+}\lambda^p (\nu \otimes \nu)(H_\lambda) \geq {\rm AVR} \int_X|u(x)|^p \, d\nu(x).
\end{equation}
Similarly, from the second inclusion in \eqref{eq:inclusions}, we get that
\begin{equation}\label{eq:estimatefromabove}
\nu(H_{\lambda,x}) \leq \nu(H_{\lambda,x,R}) + \nu(B(x_0,R)) \leq \nu \bigg(B \bigg(x, \bigg(\frac{|u(x)|}{\lambda} \bigg)^{\frac{p}{s}} \bigg) \bigg) + C_A R^s,
\end{equation}
and that 
\begin{equation}
\limsup_{\lambda \rightarrow 0^+}\lambda^p (\nu \otimes \nu)(H_\lambda) \leq {\rm AVR} \int_X|u(x)|^p \, d\nu(x).
\end{equation}
Collecting estimates \eqref{eq:estimatefrombelowavr} and \eqref{eq:estimatefromabove}, taking into account \eqref{eq:twice}, we obtain that 
\begin{equation}
\lim_{\lambda \rightarrow 0^+} \lambda^p (\nu \otimes \nu)(E_\lambda) = 2 \cdot {\rm AVR} \int_X|u(x)|^p \, d\nu(x),
\end{equation}
and consequently \eqref{eq:limit} holds in the case when $u$ is boundedly supported.

{\it \flushleft Step 3.} Throughout this Step, we consider general functions $u$ which are not necessarily boundedly supported. We will rely on the estimate in the previous case and estimate the error from taking a boundedly supported approximation. Denote
\begin{equation}
u_R = u \cdot \1_{B(x_0,R)}
\end{equation}
and
\begin{equation}
v_R = u - u_R.
\end{equation}
Note that since $u \in L^p(X,\nu)$, both functions also lie in $L^p(X,\nu)$ and $v_R \rightarrow 0$ in $L^p(X,\nu)$ as $R \rightarrow \infty$. Now, fix $\sigma \in (0,1)$ and denote
\begin{equation}
A_1 = \bigg\{ (x,y) \in X \times X: \frac{|u_R(x) - u_R(y)|}{d(x,y)^{\frac{s}{p}}} \geq (1-\sigma) \lambda \bigg\}
\end{equation}
and
\begin{equation}
A_2 = \bigg\{ (x,y) \in X \times X: \frac{|v_R(x) - v_R(y)|}{d(x,y)^{\frac{s}{p}}} \geq \sigma \lambda \bigg\}.
\end{equation}
Since $u = u_R + v_R$, we have $E_\lambda \subset A_1 \cup A_2$. We need to estimate the measures of $A_1$ and $A_2$; we start with the estimate from above.  Since $u_R$ is boundedly supported, we may apply the result obtained in Step 2 with $(1-\sigma)^{-1}u_R$ in place of $u$ and get
\begin{equation}
\lim_{\lambda\to 0}\lambda^p (\nu \otimes \nu)(A_1)= \frac{2 \cdot {\rm AVR}}{(1-\sigma)^p} \| u_R \|_{L^p(X,\nu)}^p .
\end{equation}
To estimate the measure of $A_2$, we will use the second inequality in \eqref{eq:bounds}, which we proved in Step~1. Because the value for any $\lambda$ can be estimated from above by the supremum, we plug in $\sigma\lambda$ and $v_R$ to get
\begin{equation}
\lambda^p \sigma^p (\nu \otimes \nu)(A_2) \leq c_2 \| v_R \|_{L^p(X,\nu)}^p,
\end{equation}
so
\begin{equation}
\limsup_{\lambda\to 0}\lambda^p (\nu \otimes \nu)(A_2) \leq \frac{c_2}{\sigma^p} \| v_R \|_{L^p(X,\nu)}^p.
\end{equation}
We combine the estimates for $A_1$ and $A_2$ to get
\begin{equation}
\limsup_{\lambda \rightarrow 0^+} \lambda^p (\nu \otimes \nu)(E_\lambda) \leq \frac{2 \cdot {\rm AVR}}{(1-\sigma)^p} \| u_R \|_{L^p(X,\nu)}^p + \frac{c_2}{\sigma^p} \| v_R \|_{L^p(X,\nu)}^p.
\end{equation}
In this inequality, let $R \rightarrow \infty$. Since $u_R \rightarrow u$ and $v_R \rightarrow 0$ in $L^p(X,\nu)$, we have 
\begin{equation}
\limsup_{\lambda \rightarrow 0^+} \lambda^p (\nu \otimes \nu)(E_\lambda) \leq \frac{2 \cdot {\rm AVR}}{(1-\sigma)^p} \| u \|_{L^p(X,\nu)}^p.
\end{equation}
Finally, we let $\sigma \rightarrow 0^+$ and get
\begin{equation}\label{cip}
\limsup_{\lambda \rightarrow 0^+} \lambda^p (\nu \otimes \nu)(E_\lambda) \leq 2 \cdot {\rm AVR} \| u \|_{L^p(X,\nu)}^p.
\end{equation}
For the estimate from below, we introduce a third set
\begin{equation}
A_3 = \bigg\{ (x,y) \in X \times X: \frac{|u_R(x) - u_R(y)|}{d(x,y)^{\frac{s}{p}}} \geq (1+\sigma) \lambda \bigg\}.
\end{equation}
Again, we may apply the estimate of Step 2, this time with $u_R$ in place of $u$ and $\lambda(1 + \sigma)$ in place of $\lambda$ and get
\begin{equation}
\lim_{\lambda\to0}\lambda^p (\nu \otimes \nu)(A_3) = \frac{2 \cdot {\rm AVR}}{ (1+\sigma)^p} \| u_R \|_{L^p(X,\nu)}^p.
\end{equation}
Now, notice that $A_3 \backslash A_2 \subset E_\lambda$. Therefore,
\begin{align}
\liminf_{\lambda \rightarrow 0^+} \lambda^p (\nu \otimes \nu)(E_\lambda) &\geq \liminf_{\lambda\to0}\lambda^p (\nu \otimes \nu)(A_3) - \limsup_{\lambda\to0}\lambda^p (\nu \otimes \nu)(A_2) \\
&\qquad\qquad\qquad\qquad \geq \frac{2 \cdot {\rm AVR}}{(1+\sigma)^p} \| u_R \|_{L^p(X,\nu)}^p - \frac{c_2}{\sigma^p} \| v_R \|_{L^p(X,\nu)}^p.
\end{align}
Similarly to the previous estimate, let $R \rightarrow \infty$. Since $u_R \rightarrow u$ and $v_R \rightarrow 0$ in $L^p(X,\nu)$, we have 
\begin{equation}
\liminf_{\lambda \rightarrow 0^+} \lambda^p (\nu \otimes \nu)(E_\lambda) \geq \frac{2 \cdot {\rm AVR}}{(1+\sigma)^p} \| u \|_{L^p(X,\nu)}^p.
\end{equation}
Finally, we let $\sigma \rightarrow 0^+$ and get
\begin{equation}\label{ciop}
\liminf_{\lambda \rightarrow 0^+} \lambda^p (\nu \otimes \nu)(E_\lambda) \geq 2 \cdot {\rm AVR} \| u \|_{L^p(X,\nu)}^p.
\end{equation}
Combining together \eqref{cip} and \eqref{ciop}, we prove \eqref{eq:limit}.
\end{proof}

Below, we state a second version of our main result, with somewhat different assumptions on the measure $\nu$: with respect to Theorem \ref{thm:maintheorem1}, in place of the condition involving the asymptotic volume ratio, we assume that the measure is Ahlfors regular (and not only upper Ahlfors regular).

\begin{theorem}\label{thm:maintheorem2}
Suppose that $\nu(X) = + \infty$. Assume that $\nu$ is Ahlfors regular, namely there exist constants $C_a, C_A > 0$ such that
\begin{equation}\label{eq:ahlforsregular}
C_a r^s \leq \nu(B(x,r)) \leq C_A r^s\ \ \ \forall \ x\in X, \, r>0.
\end{equation}
Then, there exist constants $c_1, c_2 > 0$ such that for all $p \in [1,\infty)$ and $u \in L^p(X,\nu)$ we have
\begin{equation}\label{eq:boundsv2}
c_1 \| u \|_{L^p(X,\nu)}^p \leq \bigg[ \frac{u(x)-u(y)}{d(x,y)^{\frac{s}{p}}} \bigg]^p_{L^p_w(X \times X, \nu \otimes \nu)} \leq c_2 \| u \|_{L^p(X,\nu)}^p.
\end{equation}
We may take $c_1 = 2 C_a$ and $c_2 = 2^{p+1} C_A$.
Furthermore, we have
\begin{equation}\label{eq:limitv2}
2 \cdot C_a \| u \|_{L^p(X,\nu)}^p\le\liminf_{\lambda \rightarrow 0^+} \lambda^{p} (\nu \otimes \nu) (E_\lambda)\le \limsup_{\lambda \rightarrow 0^+} \lambda^{p} (\nu \otimes \nu) (E_\lambda) \le 2 \cdot C_A \| u \|_{L^p(X,\nu)}^p.
\end{equation}
\end{theorem}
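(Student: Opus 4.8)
The plan is to re-run the three-step argument from the proof of Theorem~\ref{thm:maintheorem1}, replacing everywhere the limit relation $f_\lambda(x)\to{\rm AVR}\cdot|u(x)|^p$ by the pointwise two-sided bound $C_a|u(x)|^p\le f_\lambda(x)\le C_A|u(x)|^p$ that Ahlfors regularity~\eqref{eq:ahlforsregular} provides for \emph{every} $\lambda>0$; consequently the exact limit \eqref{eq:limit} is replaced by the pair of $\liminf$/$\limsup$ inequalities in \eqref{eq:limitv2}. Two preliminary remarks cut down the work. First, the upper bound in \eqref{eq:boundsv2} with $c_2=2^{p+1}C_A$ is exactly Step~1 of the proof of Theorem~\ref{thm:maintheorem1}, which used only the upper Ahlfors bound $\nu(B(x,r))\le C_Ar^s$; nothing has to be redone there. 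Second, it suffices to prove \eqref{eq:limitv2}, since the lower bound in \eqref{eq:boundsv2} with $c_1=2C_a$ follows at once: the weak-$L^p$ quantity equals $\sup_{\lambda>0}\lambda^p(\nu\otimes\nu)(E_\lambda)\ge\liminf_{\lambda\to0^+}\lambda^p(\nu\otimes\nu)(E_\lambda)\ge 2C_a\|u\|_{L^p}^p$.

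For the bounded-support case, assume $\mathrm{supp}\,u\subset B(x_0,R)$ and introduce $H_\lambda,H'_\lambda,H''_\lambda$ and then $H_{\lambda,x},H_{\lambda,x,R}$ exactly as in Step~2. The only use of the continuity hypothesis \eqref{eq:continuity} there was to discard $H''_\lambda$; in its absence I would instead observe that any $(x,y)\in H''_\lambda$ satisfies $d(x_0,x)=d(x_0,y)$ and has at least one coordinate in $B(x_0,R)$, hence \emph{both} coordinates in $B(x_0,R)$, so $(\nu\otimes\nu)(H''_\lambda)\le\nu(B(x_0,R))^2\le C_A^2R^{2s}$ uniformly in $\lambda$; therefore $\lambda^p(\nu\otimes\nu)(H''_\lambda)\to0$ and, after scaling by $\lambda^p$ and letting $\lambda\to0^+$, the identity $(\nu\otimes\nu)(E_\lambda)=2(\nu\otimes\nu)(H_\lambda)+(\nu\otimes\nu)(H''_\lambda)$ behaves as $2(\nu\otimes\nu)(H_\lambda)$. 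The inclusions \eqref{eq:inclusions} give verbatim $\nu\big(B(x,(|u(x)|/\lambda)^{p/s})\big)-C_AR^s\le\nu(H_{\lambda,x})\le\nu\big(B(x,(|u(x)|/\lambda)^{p/s})\big)+C_AR^s$; integrating over $x\in B(x_0,R)$ and multiplying by $\lambda^p$, the error terms are $\le\lambda^pC_A^2R^{2s}\to0$, while the main term $\int_X f_\lambda\,d\nu$, with $f_\lambda(x)=\lambda^p\nu\big(B(x,(|u(x)|/\lambda)^{p/s})\big)$, now satisfies $C_a|u(x)|^p\le f_\lambda(x)\le C_A|u(x)|^p$ for \emph{every} $\lambda>0$ by \eqref{eq:ahlforsregular}, whence $C_a\|u\|_{L^p}^p\le\int_X f_\lambda\,d\nu\le C_A\|u\|_{L^p}^p$ for all $\lambda$ (here not even the dominated convergence theorem is needed). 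Passing to $\liminf$ and $\limsup$ as $\lambda\to0^+$ yields \eqref{eq:limitv2} for boundedly supported $u$.

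For general $u\in L^p(X,\nu)$ I would copy Step~3 without change: with $u_R=u\1_{B(x_0,R)}$, $v_R=u-u_R$, the sets $A_1,A_2,A_3$, and the inclusions $E_\lambda\subset A_1\cup A_2$, $A_3\setminus A_2\subset E_\lambda$. The bounded-support estimate just obtained, applied to $(1-\sigma)^{-1}u_R$ and to $(1+\sigma)^{-1}u_R$, gives $\limsup_{\lambda\to0^+}\lambda^p(\nu\otimes\nu)(A_1)\le\frac{2C_A}{(1-\sigma)^p}\|u_R\|_{L^p}^p$ and $\liminf_{\lambda\to0^+}\lambda^p(\nu\otimes\nu)(A_3)\ge\frac{2C_a}{(1+\sigma)^p}\|u_R\|_{L^p}^p$, while Step~1 controls $\lambda^p(\nu\otimes\nu)(A_2)$ by $\frac{c_2}{\sigma^p}\|v_R\|_{L^p}^p$; letting $R\to\infty$ (so $\|u_R\|_{L^p}\to\|u\|_{L^p}$ and $\|v_R\|_{L^p}\to0$) and then $\sigma\to0^+$ produces exactly \eqref{eq:limitv2}.

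The argument is in essence a bookkeeping variant of the proof of Theorem~\ref{thm:maintheorem1}, so I do not expect a serious obstacle; the only point that requires a genuinely new (though one-line) observation is the removal of the continuity hypothesis~\eqref{eq:continuity}, which is handled by the uniform-in-$\lambda$ bound $(\nu\otimes\nu)(H''_\lambda)\le C_A^2R^{2s}$ noted above.
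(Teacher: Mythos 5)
Your proposal is correct and follows essentially the same route as the paper's proof: it reuses Step 1 of Theorem \ref{thm:maintheorem1} for the upper bound, replaces the AVR limit by the two-sided pointwise bound $C_a|u(x)|^p \le f_\lambda(x)\le C_A|u(x)|^p$ (valid for every $\lambda$, so no dominated convergence is needed) for boundedly supported $u$, and runs the same $A_1,A_2,A_3$ truncation argument for general $u$. The only departure is the treatment of $H''_\lambda$: the paper simply observes that an Ahlfors regular measure is doubling, so condition \eqref{eq:continuity} holds and $H''_\lambda$ is $(\nu\otimes\nu)$-null, whereas you bound $(\nu\otimes\nu)(H''_\lambda)\le C_A^2R^{2s}$ uniformly in $\lambda$ so that $\lambda^p(\nu\otimes\nu)(H''_\lambda)\to 0$; both arguments are valid.
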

\begin{remark}
Notice that if $C_a\equiv C_A$ then we conclude that 
\[
\lim_{\lambda \rightarrow 0^+} \lambda^{p} (\nu \otimes \nu) (E_\lambda) = 2 \cdot C_A \| u \|_{L^p(X,\nu)}^p.
\]
If the two constants $C_a, C_A$ are different and no assumption is made on the AVR, we cannot infer in general that the limit above exists (see Example \ref{ex:nolimitaslambdagoestozero} below).
\end{remark}

\begin{proof}[Proof of Theorem \ref{thm:maintheorem2}]
Observe that the proof of the upper bound in Theorem \ref{thm:maintheorem1} used only upper Ahlfors regularity of the measure $\nu$; therefore, the estimate from above in \eqref{eq:boundsv2} holds with the constant $c_2 = 2^{p+1} C_A$. We now show how to adapt the rest of the proof of Theorem \ref{thm:maintheorem1} to obtain the lower bound under the current assumptions.

{\flushleft \it Step 1.} In order to prove the lower bound in \eqref{eq:boundsv2}, we will again consider the limit of the expression $\lambda^p (\nu \otimes \nu)(E_\lambda)$ as $\lambda \rightarrow 0^+$, as it is smaller or equal to the supremum. Again, we first consider the case when $u$ has bounded support (notice that, under the current assumptions, this is equivalent to ask for compact support). Fix $x_0 \in X$ and $R > 0$ such that $\mbox{supp } u \subset B(x_0,R)$. Working as in Step 2 of the proof of Theorem \ref{thm:maintheorem1}, we see that if~we~denote
\begin{equation}
H_\lambda = E_\lambda \cap \{ (x,y) \in X \times X: \, d(x_0,y) > d(x_0,x) \},
\end{equation}
we have
\begin{equation}\label{eq:twicev2}
(\nu \otimes \nu)(E_\lambda) = 2(\nu \otimes \nu)(H_\lambda),
\end{equation}
so it suffices to compute the limit with $H_\lambda$ in place of $E_\lambda$ (note that an Ahlfors regular measure is doubling, so condition \eqref{eq:continuity} is satisfied). We also use the same definitions of the sets $H_{\lambda,x}$ and $H_{\lambda,x,R}$ as in the
 previous proof, i.e., for $(x,y) \in H_\lambda$ we have that $x \in B(x_0,R)$ and then set
\begin{equation}
H_{\lambda,x} := \bigg\{ y \in X: \, d(x_0,y) > d(x_0,x), \, \frac{|u(x)-u(y)|}{d(x,y)^{\frac{s}{p}}} \geq \lambda \bigg\}
\end{equation}
and
\begin{equation}
H_{\lambda,x,R} = \bigg\{ y \in X: \, d(x_0,y) \geq R, \, d(x,y) \leq \bigg(\frac{|u(x)|}{\lambda} \bigg)^{\frac{p}{s}} \bigg\},
\end{equation}
so that
\begin{equation}\label{eq:inclusionsv2}
H_{\lambda,x,R} \subset H_{\lambda,x} \subset H_{\lambda,x,R} \cup B(x_0,R).
\end{equation}
We now estimate from above and below the measures of $H_{\lambda,x}$ for all $x \in B(x_0,R)$ using \eqref{eq:inclusionsv2}. From the first inclusion, using the Ahlfors regularity condition \eqref{eq:ahlforsregular}, we have
\begin{equation}
\nu(H_{\lambda,x}) \geq \nu(H_{\lambda,x,R}) \geq \nu \bigg(B \bigg(x, \bigg(\frac{|u(x)|}{\lambda} \bigg)^{\frac{p}{s}} \bigg) \bigg) - \nu(B(x_0,R))\geq C_a \frac{|u(x)|^p}{\lambda^p} - C_A R^s.
\end{equation}
Integrating it over $B(x_0,R)$ we get
\begin{equation}
(\nu \otimes \nu)(H_\lambda) = \int_{X} \nu(H_{\lambda,x}) \, d\nu(x) = \int_{B(x_0,R)} \nu(H_{\lambda,x}) \, d\nu(x) \geq \int_{B(x_0,R)} \bigg( C_a \frac{|u(x)|^p}{\lambda^p} - C_A R^s \bigg) \, d\nu(x).
\end{equation}
Multiplying the above inequality by $\lambda^p$ and noticing that the support of $u$ lies in $B(x_0,R)$, we get 
\begin{equation}
\lambda^p (\nu \otimes \nu)(H_\lambda)  \geq C_a \int_X |u(x)|^p \, d\nu(x)- \lambda^p C_A^2 R^{2s}.
\end{equation}
Letting $\lambda \rightarrow 0$, we obtain
\begin{equation}
\liminf_{\lambda \rightarrow 0^+}\lambda^p (\nu \otimes \nu)(H_\lambda) \geq C_a \int_X |u(x)|^p d\nu(x).
\end{equation}
In light of \eqref{eq:twicev2}, this proves the lower bound in \eqref{eq:boundsv2} for boundedly supported $u$ with $c_1 = 2C_a$, i.e.,
\begin{equation}\label{eq:lowerboundforboundedsupport}
\bigg[ \frac{u(x)-u(y)}{d(x,y)^{\frac{s}{p}}} \bigg]_{L^p_w(X \times X, \nu \otimes \nu)} \geq \liminf_{\lambda \rightarrow 0^+} \lambda^p (\nu \otimes \nu)(E_\lambda) \geq 2 C_a \int_X |u(x)|^p \, d\nu(x).
\end{equation}
Furthermore, from the second inclusion in \eqref{eq:inclusionsv2}, we get that
\begin{equation}\label{eq:estimatefromabovev2}
\nu(H_{\lambda,x}) \leq \nu(H_{\lambda,x,R}) + \nu(B(x_0,R)) \leq \nu \bigg(B \bigg(x, \bigg(\frac{|u(x)|}{\lambda} \bigg)^{\frac{p}{s}} \bigg) \bigg) + C_A R^s \leq C_A \frac{|u(x)|^p}{\lambda^p} + C_A R^s,
\end{equation}
which implies
\begin{equation}
\limsup_{\lambda \rightarrow 0^+}\lambda^p (\nu \otimes \nu)(H_\lambda) \leq C_A \int_X |u(x)|^p \, d\nu(x).
\end{equation}
Again using \eqref{eq:twicev2}, we obtain that
\begin{equation}\label{eq:upperboundforboundedsupport}
\limsup_{\lambda \rightarrow 0^+} \lambda^p (\nu \otimes \nu)(E_\lambda) \leq 2 C_A \int_X |u(x)|^p \, d\nu(x).
\end{equation}
Collecting \eqref{eq:lowerboundforboundedsupport} and \eqref{eq:upperboundforboundedsupport}, we obtain that \eqref{eq:limitv2} holds whenever $u$ has bounded support.

{\it \flushleft Step 2.} We now consider general functions $u$ which are not necessarily boundedly supported, relying on the estimate in the previous case and estimating the error from taking a boundedly supported approximation. Denote
\begin{equation}
u_R = u \cdot \1_{B(x_0,R)}
\end{equation}
and
\begin{equation}
v_R = u - u_R.
\end{equation}
Since $u \in L^p(X,\nu)$, both functions also lie in $L^p(X,\nu)$ and $v_R \rightarrow 0$ in $L^p(X,\nu)$ as $R \rightarrow \infty$. Fix $\sigma \in (0,1)$. Then, a simple modification of the argument from Step 3 of the proof of Theorem \ref{thm:maintheorem1} yields the result. We use the same notation, i.e.,
\begin{equation}
A_1 = \bigg\{ (x,y) \in X \times X: \frac{|u_R(x) - u_R(y)|}{d(x,y)^{\frac{s}{p}}} \geq (1-\sigma) \lambda \bigg\},
\end{equation}
\begin{equation}
A_2 = \bigg\{ (x,y) \in X \times X: \frac{|v_R(x) - v_R(y)|}{d(x,y)^{\frac{s}{p}}} \geq \sigma \lambda \bigg\},
\end{equation}
and
\begin{equation}
A_3 = \bigg\{ (x,y) \in X \times X: \frac{|u_R(x) - u_R(y)|}{d(x,y)^{\frac{s}{p}}} \geq (1+\sigma) \lambda \bigg\}.
\end{equation}
First we prove the lower bound; to this end, observe that by the same argument as in the proof of Theorem \ref{thm:maintheorem1} we have
\begin{equation}\label{eq:estimatefora2v2}
\limsup_{\lambda\to 0}\lambda^p (\nu \otimes \nu)(A_2) \leq \frac{c_2}{\sigma^p} \| v_R \|_{L^p(X,\nu)}^p.
\end{equation}
Since $u_R$ is boundedly supported, applying \eqref{eq:lowerboundforboundedsupport} with $u_R$ in place of $u$ and $\lambda(1 + \sigma)$ in place of $\lambda$, we get 
\begin{equation}
\liminf_{\lambda\to0}\lambda^p (\nu \otimes \nu)(A_3) \geq \frac{2 C_a}{ (1+\sigma)^p} \| u_R \|_{L^p(X,\nu)}^p.
\end{equation}
Since $A_3 \backslash A_2 \subset E_\lambda$, we have
\begin{align}
\liminf_{\lambda \rightarrow 0^+} \lambda^p (\nu \otimes \nu)(E_\lambda) &\geq \liminf_{\lambda \to 0}\lambda^p (\nu \otimes \nu)(A_3) - \limsup_{\lambda\to0}\lambda^p (\nu \otimes \nu)(A_2) \\
&\qquad\qquad\qquad\qquad \geq \frac{2 C_a}{(1+\sigma)^p} \| u_R \|_{L^p(X,\nu)}^p - \frac{c_2}{\sigma^p} \| v_R \|_{L^p(X,\nu)}^p.
\end{align}
We now let $R \rightarrow \infty$. Since $u_R \rightarrow u$ and $v_R \rightarrow 0$ in $L^p(X,\nu)$, we have 
\begin{equation}
\liminf_{\lambda \rightarrow 0^+} \lambda^p (\nu \otimes \nu)(E_\lambda) \geq \frac{2 C_a}{(1+\sigma)^p} \| u \|_{L^p(X,\nu)}^p.
\end{equation}
Letting $\sigma \rightarrow 0^+$, we get
\begin{equation}\label{ciopv2}
\liminf_{\lambda \rightarrow 0^+} \lambda^p (\nu \otimes \nu)(E_\lambda) \geq 2 C_a \| u \|_{L^p(X,\nu)}^p,
\end{equation}
which proves the lower bound in \eqref{eq:boundsv2} with $c_1 = 2C_a$.

For the upper bound, applying \eqref{eq:upperboundforboundedsupport} for the function $(1-\sigma)^{-1}u_R$ in place of $u$ yields 
\begin{equation}\label{eq:estimatefora1v2}
\limsup_{\lambda\to 0} \lambda^p (\nu \otimes \nu)(A_1) \leq \frac{2 C_A}{(1-\sigma)^p} \| u_R \|_{L^p(X,\nu)}^p .
\end{equation}
Note that $E_\lambda \subset A_1 \cup A_2$. Thus, combining the estimates \eqref{eq:estimatefora2v2} and \eqref{eq:estimatefora1v2}, we obtain
\begin{equation}
\limsup_{\lambda \rightarrow 0^+} \lambda^p (\nu \otimes \nu)(E_\lambda) \leq \frac{2 C_A}{(1-\sigma)^p} \| u_R \|_{L^p(X,\nu)}^p + \frac{c_2}{\sigma^p} \| v_R \|_{L^p(X,\nu)}^p.
\end{equation}
Letting $R \rightarrow \infty$ results in
\begin{equation}
\limsup_{\lambda \rightarrow 0^+} \lambda^p (\nu \otimes \nu)(E_\lambda) \leq \frac{2 C_A}{(1-\sigma)^p} \| u \|_{L^p(X,\nu)}^p,
\end{equation}
and finally sending $\sigma \rightarrow 0^+$ gives
\begin{equation}
\limsup_{\lambda \rightarrow 0^+} \lambda^p (\nu \otimes \nu)(E_\lambda) \leq 2 C_A \| u \|_{L^p(X,\nu)}^p.
\end{equation}
This together with \eqref{ciopv2} shows \eqref{eq:limitv2} in the general case.
\end{proof}

Actually, the proofs presented in this Section show us a bit more. If instead of a power of the distance function, equal to the dimension $s$ in the Ahlfors regularity condition, we take a more general function of the distance, we easily obtain similar results. The two results presented below are corollaries of the proofs of Theorem \ref{thm:maintheorem1} and \ref{thm:maintheorem2}. To this end, we need to generalise the Ahlfors regularity condition and the asymptotic value ratio.
 
\begin{theorem}\label{thm:cor1}
Suppose that $\nu(X) = + \infty$. Let $f: [0,\infty) \rightarrow [0,\infty)$ be a convex increasing function with $f(0) = 0$. Assume that there exists $C_{f,A} > 0$ such that
\begin{equation}\label{eq:upperfahlfors}
\nu(B(x,r)) \leq C_{f,A} f(r) \ \ \ \forall \ x\in X, \, r>0
\end{equation}
(we say that $\nu$ is {\it upper $f$-Ahlfors regular}). Furthermore, we require that condition \eqref{eq:continuity} holds, and that the limit
\begin{equation}\label{eq:favr}
{\rm AVR}_f = \lim_{r \rightarrow \infty} \frac{\nu(B(x_0,r))}{f(r)}
\end{equation}
is well-defined for some (equivalently: all) $x_0 \in X$ and ${\rm AVR}_f \in (0,+\infty)$. Then, there exist constants $c_1, c_2, c_3 > 0$ such that for all $p \in [1,\infty)$ and $u \in L^p(X,\nu)$ we have
\begin{equation}\label{eq:bounds3}
c_1 \| u \|_{L^p(X,\nu)}^p \leq \bigg[ \frac{u(x)-u(y)}{f(d(x,y))^{\frac{1}{p}}} \bigg]^p_{L^p_w(X \times X, \nu \otimes \nu)} \leq c_2 \| u \|_{L^p(X,\nu)}^p.
\end{equation}
Moreover, the lower bound can be improved in the following way. If we denote
\begin{equation}
E_{\lambda}^f = \bigg\{(x,y)\in X \times X: \, \frac{|u(x)-u(y)|}{f(d(x,y))^{\frac{1}{p}}} \geq \lambda \bigg\}
\end{equation}
we have
\begin{equation}\label{eq:limit3}
\lim_{\lambda \rightarrow 0^+} \lambda^{p} (\nu \otimes \nu) (E_\lambda^f) = c_3 \| u \|_{L^p(X,\nu)}^p.
\end{equation}
We may take $c_1 = c_3 = 2 \cdot {\rm AVR}_f$ and $c_2 = 2^{p+1} C_{f,A}$. 
\end{theorem}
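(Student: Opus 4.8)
The plan is to run the proof of Theorem~\ref{thm:maintheorem1} essentially verbatim, replacing $d(x,y)^s$ throughout by $f(d(x,y))$ and every explicit power law by a manipulation of the (generalised) inverse $f^{-1}$. First I would record the elementary facts about $f$ that make this legitimate. Convexity together with $f(0)=0$ gives $f(tr)\le t\,f(r)$ for $t\in[0,1]$, hence $f(r)\to0$ as $r\to0^+$ and $f$ is continuous on $[0,\infty)$; being increasing with $f(0)=0$, it is strictly positive on $(0,\infty)$; and it is unbounded, since otherwise $\nu(B(x,r))\le C_{f,A}f(r)\le C_{f,A}\sup f<\infty$ for all $r>0$ would force $\nu(X)<\infty$, against our hypothesis. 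Thus $f\colon[0,\infty)\to[0,\infty)$ is a homeomorphism with a genuine continuous increasing inverse $f^{-1}$; in particular $f(f^{-1}(t))=t$ for all $t\ge0$, and $f(d(x,y))\le M$ is equivalent to $d(x,y)\le f^{-1}(M)$.

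For the upper bound (the analogue of Step~1 of Theorem~\ref{thm:maintheorem1}), note that $(x,y)\in E_\lambda^f$ forces $\max\{|u(x)|,|u(y)|\}\ge\tfrac12\lambda\,f(d(x,y))^{1/p}$, so $E_\lambda^f$ is contained in the union of $\{(x,y):d(x,y)\le f^{-1}((\tfrac{2}{\lambda}|u(x)|)^p)\}$ and the analogous set with $x$ and $y$ exchanged. By Fubini, upper $f$-Ahlfors regularity and $f(f^{-1}(t))=t$, the $\lambda^p$-weighted measure of the first set is
\[
\lambda^p\int_X\nu\Big(B\big(x,f^{-1}\big((\tfrac{2}{\lambda}|u(x)|)^p\big)\big)\Big)\,d\nu(x)\le\lambda^pC_{f,A}\int_X\big(\tfrac{2}{\lambda}|u(x)|\big)^p\,d\nu(x)=2^pC_{f,A}\,\|u\|_{L^p(X,\nu)}^p,
\]
and the symmetric term obeys the same bound; taking the supremum over $\lambda>0$ gives the right inequality in~\eqref{eq:bounds3} with $c_2=2^{p+1}C_{f,A}$ (only the upper $f$-Ahlfors bound is used here).

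For the lower bound and the limit~\eqref{eq:limit3}, I would follow Steps~2 and~3 of Theorem~\ref{thm:maintheorem1}. Assuming first $\mathrm{supp}\,u\subset B(x_0,R)$, symmetrising in $d(x_0,\cdot)$ gives $(\nu\otimes\nu)(E_\lambda^f)=2(\nu\otimes\nu)(H_\lambda)$ with $H_\lambda=E_\lambda^f\cap\{d(x_0,y)>d(x_0,x)\}$ (the set $d(x_0,x)=d(x_0,y)$ is $(\nu\otimes\nu)$-null by~\eqref{eq:continuity}); and, with $H_{\lambda,x}$ as in that proof and $H_{\lambda,x,R}=\{y:d(x_0,y)\ge R,\ d(x,y)\le f^{-1}((|u(x)|/\lambda)^p)\}$, the inclusions $H_{\lambda,x,R}\subset H_{\lambda,x}\subset H_{\lambda,x,R}\cup B(x_0,R)$ together with $\nu(B(x_0,R))\le C_{f,A}f(R)$ reduce matters to the asymptotics of
\[
f_\lambda(x)=\lambda^p\,\nu\Big(B\big(x,f^{-1}\big((|u(x)|/\lambda)^p\big)\big)\Big).
\]
Upper $f$-Ahlfors regularity gives $0\le f_\lambda(x)\le C_{f,A}|u(x)|^p$, and the substitution $r=f^{-1}((|u(x)|/\lambda)^p)$ — so that $f(r)=(|u(x)|/\lambda)^p$ and $r\to\infty$ as $\lambda\to0^+$ — yields $f_\lambda(x)=|u(x)|^p\,\nu(B(x,r))/f(r)\to{\rm AVR}_f\,|u(x)|^p$ for $\nu$-a.e.\ $x$, while the error term $\lambda^pC_{f,A}^2f(R)^2\to0$. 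Dominated convergence then gives $\lambda^p(\nu\otimes\nu)(E_\lambda^f)\to2\,{\rm AVR}_f\,\|u\|_{L^p(X,\nu)}^p$ for boundedly supported $u$. The passage to general $u\in L^p(X,\nu)$ is verbatim Step~3 of Theorem~\ref{thm:maintheorem1}: with $u_R=u\cdot\1_{B(x_0,R)}$, $v_R=u-u_R$ and the sets $A_1,A_2,A_3$ (now with $f(d(x,y))^{1/p}$ in the denominators), one applies the bounded-support limit to $(1-\sigma)^{-1}u_R$ for $A_1$ and to $u_R$ (with $\lambda(1+\sigma)$ in place of $\lambda$) for $A_3$, the already-proven upper bound for $A_2$, and the inclusions $E_\lambda^f\subset A_1\cup A_2$, $A_3\setminus A_2\subset E_\lambda^f$, and then sends $R\to\infty$ and $\sigma\to0^+$. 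This yields~\eqref{eq:limit3}, and hence the left inequality in~\eqref{eq:bounds3}, with $c_1=c_3=2\,{\rm AVR}_f$.

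I do not expect a genuine obstacle: the only new ingredient compared with Theorem~\ref{thm:maintheorem1} is the bookkeeping with $f^{-1}$, and the one point deserving care is that the substitution $r=f^{-1}((|u(x)|/\lambda)^p)$ really converts the pointwise limit into the ratio $\nu(B(x,r))/f(r)$ defining ${\rm AVR}_f$ — which is precisely where one uses that $f$ is a homeomorphism of $[0,\infty)$, i.e.\ continuous (by convexity and $f(0)=0$) and unbounded (by $\nu(X)=+\infty$).
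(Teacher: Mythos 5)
Your proposal is correct and follows essentially the same route as the paper's proof: the paper likewise runs the argument of Theorem \ref{thm:maintheorem1} verbatim, replacing $d(x,y)^s$ by $f(d(x,y))$, rewriting the relevant sets via $f^{-1}$, and using the substitution $r=f^{-1}(|u(x)|^p/\lambda^p)$ to identify the pointwise limit of $f_\lambda$ with ${\rm AVR}_f\,|u(x)|^p$ before applying dominated convergence and the same cut-off argument for general $u$. Your preliminary observations that $f$ is a continuous, unbounded bijection of $[0,\infty)$ (so that $f^{-1}$ is genuinely defined and $r\to\infty$ as $\lambda\to 0^+$) are a useful justification of a point the paper leaves implicit.
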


Arguing as in \cite{HP}, it is easy to see that ${\rm AVR}_f$ does not depend on $x_0$. For the choice $f(r) = r^s$, we recover the statement of Theorem \ref{thm:maintheorem1}; for the choice $f(r) = e^r$, we retrieve a variant of the volume entropy condition appearing in \cite{HP}; for general $f$, the main idea behind this result is to allow for a superlinear growth of the volume of the balls which is faster than any power-type growth.

\begin{proof}
The outline of the proof is identical to the one of Theorem \ref{thm:maintheorem1}. For the upper bound in Step 1, it is enough to see that when $(x,y) \in E_\lambda^f$, then either $|u(x)| \geq \frac12 \lambda f(d(x,y))^{\frac{1}{p}}$ or $|u(y)| \geq \frac12 \lambda f(d(x,y))^{\frac{1}{p}}$. Then, by the upper $f$-Ahlfors regularity of $f$ (condition \eqref{eq:upperfahlfors}),
\begin{align}
\lambda^p \int_X \int_X &\1_{\{ (x,y): \, |u(x)| \geq \frac12 \lambda f(d(x,y))^{\frac{1}{p}} \} } \, d\nu(y) \, d\nu(x) \\
&\qquad\qquad\qquad = \lambda^p \int_X \int_X \1_{\{ (x,y): \, d(x,y) \leq f^{-1}(\frac{2^p}{\lambda^p} |u(x)|^p) \} } \, d\nu(y) \, d\nu(x) \\
&\qquad\qquad\qquad = \lambda^p \int_X \nu \bigg(B \bigg (x, f^{-1}\bigg(\frac{2^p}{\lambda^p} |u(x)|^p \bigg) \bigg) \bigg) \, d\nu(x) \leq 2^p C_{f,A} \int_X |u(x)|^p \, d\nu(x),
\end{align}
and we conclude as in Theorem \ref{thm:maintheorem1} that the upper bound in \eqref{eq:bounds3} holds 
with $c_2 = 2^{p+1} C_{f,A}$.

For the lower bound in \eqref{eq:bounds3} for boundedly supported functions in Step 2, we again compute the limit in \eqref{eq:limit3}. Fix $x_0 \in X$ and $R > 0$ such that $\mbox{supp } u \subset B(x_0,R)$. It is enough to compute the measure of the set
\begin{equation}
H_\lambda^f = E_\lambda^f \cap \{ (x,y) \in X \times X: \, d(x_0,y) > d(x_0,x) \}.
\end{equation}
For $x \in B(x_0,R)$, setting
\begin{equation}
H_{\lambda,x}^f := \bigg\{ y \in X: \, d(x_0,y) > d(x_0,x), \, \frac{|u(x)-u(y)|}{f(d(x,y))^{\frac{1}{p}}} \geq \lambda \bigg\}
\end{equation}
and
\begin{equation}
H_{\lambda,x,R}^f = \bigg\{ y \in X: \, d(x_0,y) \geq R, \, d(x,y) \leq f^{-1}\bigg(\frac{|u(x)|^p}{\lambda^p} \bigg) \bigg\},
\end{equation}
so that inclusions \eqref{eq:inclusions} hold with the modified sets $H_{\lambda,x}^f$ and $H_{\lambda,x,R}^f$, we estimate
\begin{equation}
\nu(H_{\lambda,x,R}^f) \geq \nu \bigg(B \bigg(x, f^{-1}\bigg(\frac{|u(x)|^p}{\lambda^p} \bigg) \bigg) \bigg) - \nu(B(x_0,R))\geq  \nu \bigg(B \bigg(x, f^{-1}\bigg(\frac{|u(x)|^p}{\lambda^p} \bigg) \bigg) \bigg) - C_{f,A} f(R),
\end{equation}
and we conclude as in the proof of Theorem \ref{thm:maintheorem1} that
\begin{equation}
\lambda^p (\nu \otimes \nu)(H_\lambda^f)  \geq \lambda^p \int_X \nu \bigg( B \bigg(x, f^{-1}\bigg(\frac{|u(x)|^p}{\lambda^p} \bigg) \bigg) \bigg) \, d\nu(x)- \lambda^p C_{f,A}^2 f(R)^{2}.
\end{equation}
To take the limit as $\lambda\to 0$, define the function 
$$f_{\lambda}(x) =\lambda^p \cdot \nu \bigg(B \bigg(x, f^{-1}\bigg(\frac{|u(x)|^p}{\lambda^p} \bigg) \bigg) \bigg)$$
and observe that
\begin{equation}\label{eq:fflambda}
0\leq f_{\lambda}(x)\leq C_{f,A} |u(x)|^p \qquad \mbox{and} \qquad f_{\lambda}\to {\rm AVR}_f \cdot |u(x)|^p \quad \nu-\mbox{a.e} \ \mbox{in } X;
\end{equation}
indeed, by condition \eqref{eq:upperfahlfors}
\begin{equation}
f_\lambda(x) = \lambda^p \cdot \nu \bigg(B \bigg(x, f^{-1}\bigg(\frac{|u(x)|^p}{\lambda^p} \bigg) \bigg) \bigg) \leq \lambda^p C_{f,A} f \bigg( f^{-1}\bigg(\frac{|u(x)|^p}{\lambda^p} \bigg) \bigg) = C_{f,A} |u(x)|^p
\end{equation}
and making the change of variables $r= f^{-1}(\frac{|u(x)|^p}{\lambda^p})$, by assumption \eqref{eq:favr} we obtain
\begin{equation}
\lim_{\lambda \rightarrow 0^+} f_\lambda(x) = \lim_{\lambda \rightarrow 0^+} \lambda^p \cdot \nu \bigg(B \bigg(x, f^{-1}\bigg(\frac{|u(x)|^p}{\lambda^p} \bigg) \bigg) \bigg) = \lim_{r \rightarrow \infty} \frac{|u(x)|^p}{f(r)} \nu(B(x,r)) = {\rm AVR}_f \cdot |u(x)|^p.
\end{equation}
Using the properties \eqref{eq:fflambda}, we conclude the proof of claim \eqref{eq:limit3} for boundedly supported functions as in Step 2 of the proof of Theorem \ref{thm:maintheorem1}.

Finally, the passage from boundedly supported functions to the general case in Step 3 is the same up to replacing $E_\lambda$ with $E_\lambda^f$ and ${\rm AVR}$ with ${\rm AVR}_f$ (in the whole argument), as well as $d(x,y)^{\frac{s}{p}}$ by $f(d(x,y))^{\frac{1}{p}}$ in the definitions of sets $A_1$, $A_2$ and $A_3$.
\end{proof}

Using a similar argument, we recover also the following variant of Theorem \ref{thm:maintheorem2}.

\begin{theorem}\label{thm:cor2}
Suppose that $\nu(X) = + \infty$. Let $f: [0,\infty) \rightarrow [0,\infty)$ be a convex increasing function with $f(0) = 0$. Assume that there exist $C_{f,a}, C_{f,A} > 0$ such that
\begin{equation}\label{eq:fahlfors}
C_{f,a} f(r) \leq \nu(B(x,r)) \leq C_{f,A} f(r) \ \ \ \forall \ x\in X, \, r>0
\end{equation}
(we say that $\nu$ is {\it $f$-Ahlfors regular}). Then, there exist constants $c_1, c_2 > 0$ such that for all $p \in [1,\infty)$ and $u \in L^p(X,\nu)$ we have
\begin{equation}\label{eq:bounds4}
c_1 \| u \|_{L^p(X,\nu)}^p \leq \bigg[ \frac{u(x)-u(y)}{f(d(x,y))^{\frac{1}{p}}} \bigg]^p_{L^p_w(X \times X, \nu \otimes \nu)} \leq c_2 \| u \|_{L^p(X,\nu)}^p.
\end{equation}
We may take $c_1 = 2 C_{f,a}$ and $c_2 = 2^{p+1} C_{f,A}$. Furthermore, if we denote
\begin{equation}
E_{\lambda}^f = \bigg\{(x,y)\in X \times X: \, \frac{|u(x)-u(y)|}{f(d(x,y))^{\frac{1}{p}}} \geq \lambda \bigg\}
\end{equation}
we have
\begin{equation}\label{eq:limit4}
2 \cdot C_{f,a} \| u \|_{L^p(X,\nu)}^p\le\liminf_{\lambda \rightarrow 0^+} \lambda^{p} (\nu \otimes \nu) (E_\lambda^f) \le \limsup_{\lambda \rightarrow 0^+} \lambda^{p} (\nu \otimes \nu) (E_\lambda^f) \le 2 \cdot C_{f,A} \| u \|_{L^p(X,\nu)}^p.
\end{equation}
\end{theorem}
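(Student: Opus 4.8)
The plan is to mirror the proof of Theorem \ref{thm:maintheorem1} (or rather Theorem \ref{thm:maintheorem2}) almost verbatim, replacing the power $r^s$ by the convex function $f(r)$ throughout; since the previous theorem's proof only used upper Ahlfors regularity for the upper bound and a sandwiching argument for the lower bound, each piece translates directly once we have the right bookkeeping. First I would dispose of the upper bound in \eqref{eq:bounds4}: by the very same splitting as in Step 1 of Theorem \ref{thm:maintheorem1}, when $(x,y) \in E_\lambda^f$ either $|u(x)| \ge \tfrac12 \lambda f(d(x,y))^{1/p}$ or $|u(y)| \ge \tfrac12 \lambda f(d(x,y))^{1/p}$; rewriting $|u(x)| \ge \tfrac12 \lambda f(d(x,y))^{1/p}$ as $d(x,y) \le f^{-1}\bigl(\tfrac{2^p}{\lambda^p}|u(x)|^p\bigr)$ (here we use that $f$ is increasing, hence injective on its range, and that $f(0)=0$, so $f^{-1}$ makes sense), applying Fubini, and then using the upper bound in \eqref{eq:fahlfors} to estimate $\nu\bigl(B(x,f^{-1}(\tfrac{2^p}{\lambda^p}|u(x)|^p))\bigr) \le C_{f,A} \cdot \tfrac{2^p}{\lambda^p}|u(x)|^p$, we obtain $\lambda^p(\nu\otimes\nu)(E_\lambda^f) \le 2^{p+1} C_{f,A}\|u\|_{L^p}^p$, and taking the supremum over $\lambda$ gives $c_2 = 2^{p+1}C_{f,A}$. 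This is identical to the computation already displayed in the proof of Theorem \ref{thm:cor1}.

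Next I would handle the lower bound and the $\liminf/\limsup$ estimates in \eqref{eq:limit4} for boundedly supported $u$, following Step 1 of the proof of Theorem \ref{thm:maintheorem2}. Note that an $f$-Ahlfors regular measure is still doubling (comparing $\nu(B(x,2r))$ and $\nu(B(x,r))$ uses $f(2r) \le$ a constant times $f(r)$, which holds for convex $f$ with $f(0)=0$ since then $f(2r)/f(r)$ is controlled — actually convexity with $f(0)=0$ gives $f(r) \le \tfrac12 f(2r)$, i.e. $f(2r) \ge 2f(r)$, but for the doubling \emph{upper} bound we do not even need this: $\nu(B(x,2r)) \le C_{f,A}f(2r)$ and $\nu(B(x,r)) \ge C_{f,a}f(r)$, so we just need $f(2r) \le C f(r)$; this may fail for very fast-growing $f$ such as $f(r) = e^r$, so instead I would simply invoke \eqref{eq:continuity} directly as a standing hypothesis is \emph{not} assumed here — wait). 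Let me restate: since \eqref{eq:continuity} is not assumed in Theorem \ref{thm:cor2}, I would note that the map $r \mapsto \nu(B(x,r))$ is squeezed between $C_{f,a}f(r)$ and $C_{f,A}f(r)$, both continuous, but this does not give continuity of $\nu(B(x,\cdot))$ itself; however, the symmetrisation argument in Step 2 of Theorem \ref{thm:maintheorem1} only needs that $(\nu\otimes\nu)(H_\lambda'') = 0$, where $H_\lambda''$ sits inside $\{(x,y): d(x_0,x) = d(x_0,y)\}$, and by Fubini this reduces to showing $\nu(\{y: d(x_0,y) = d(x_0,x)\}) = 0$ for $\nu$-a.e. $x$, i.e. that spheres have zero measure — which follows because $\nu$ is $\sigma$-finite and the spheres $\{d(x_0,\cdot) = t\}$ for $t>0$ are disjoint, so only countably many can have positive measure, hence for all but countably many radii the sphere is null, which is enough after one more Fubini. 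So symmetrisation yields $(\nu\otimes\nu)(E_\lambda^f) = 2(\nu\otimes\nu)(H_\lambda^f)$. Then, with $x \in B(x_0,R)$, one has the inclusions $H_{\lambda,x,R}^f \subset H_{\lambda,x}^f \subset H_{\lambda,x,R}^f \cup B(x_0,R)$ where $H_{\lambda,x,R}^f = \{y: d(x_0,y)\ge R,\ d(x,y) \le f^{-1}(|u(x)|^p/\lambda^p)\}$; the first inclusion combined with the lower bound $\nu(B(x,f^{-1}(|u(x)|^p/\lambda^p))) \ge C_{f,a}|u(x)|^p/\lambda^p$ and $\nu(B(x_0,R)) \le C_{f,A}f(R)$ gives, after multiplying by $\lambda^p$ and integrating over $B(x_0,R) \supset \mathrm{supp}\,u$,
\begin{equation}
\lambda^p(\nu\otimes\nu)(H_\lambda^f) \ge C_{f,a}\|u\|_{L^p(X,\nu)}^p - \lambda^p C_{f,A}^2 f(R)^2,
\end{equation}
so $\liminf_{\lambda\to 0^+}\lambda^p(\nu\otimes\nu)(H_\lambda^f) \ge C_{f,a}\|u\|_{L^p}^p$; symmetrically, the second inclusion with the upper bound $\nu(B(x,f^{-1}(|u(x)|^p/\lambda^p))) \le C_{f,A}|u(x)|^p/\lambda^p$ gives $\limsup_{\lambda\to 0^+}\lambda^p(\nu\otimes\nu)(H_\lambda^f) \le C_{f,A}\|u\|_{L^p}^p$. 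Using $(\nu\otimes\nu)(E_\lambda^f) = 2(\nu\otimes\nu)(H_\lambda^f)$ proves \eqref{eq:limit4} for boundedly supported $u$, and in particular $c_1 = 2C_{f,a}$ works there.

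Finally I would pass from boundedly supported $u$ to general $u \in L^p(X,\nu)$ exactly as in Step 2 of the proof of Theorem \ref{thm:maintheorem2}: write $u_R = u\cdot\1_{B(x_0,R)}$ and $v_R = u - u_R$, so $v_R \to 0$ in $L^p$ as $R\to\infty$; fix $\sigma \in (0,1)$ and set $A_1, A_2, A_3$ as the superlevel sets of $\tfrac{|u_R(x)-u_R(y)|}{f(d(x,y))^{1/p}}$, $\tfrac{|v_R(x)-v_R(y)|}{f(d(x,y))^{1/p}}$, $\tfrac{|u_R(x)-u_R(y)|}{f(d(x,y))^{1/p}}$ at levels $(1-\sigma)\lambda$, $\sigma\lambda$, $(1+\sigma)\lambda$ respectively. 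Using the already-proved upper bound in \eqref{eq:bounds4} applied to $v_R$ gives $\limsup_{\lambda\to 0^+}\lambda^p(\nu\otimes\nu)(A_2) \le c_2\sigma^{-p}\|v_R\|_{L^p}^p$; the bounded-support result applied to $(1-\sigma)^{-1}u_R$ and to $u_R$ (with $\lambda(1+\sigma)$) controls $A_1$ and $A_3$; then the inclusions $E_\lambda^f \subset A_1 \cup A_2$ and $A_3 \setminus A_2 \subset E_\lambda^f$ yield, after sending $R \to \infty$ and then $\sigma \to 0^+$, the two-sided bound $2C_{f,a}\|u\|_{L^p}^p \le \liminf_{\lambda\to 0^+}\lambda^p(\nu\otimes\nu)(E_\lambda^f) \le \limsup_{\lambda\to 0^+}\lambda^p(\nu\otimes\nu)(E_\lambda^f) \le 2C_{f,A}\|u\|_{L^p}^p$, which is \eqref{eq:limit4}; since the supremum defining the weak-$L^p$ norm dominates the $\liminf$, this also gives the lower bound in \eqref{eq:bounds4} with $c_1 = 2C_{f,a}$. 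The only genuinely new point compared to the previous proofs is the use of $f^{-1}$, which is harmless because $f$ is strictly increasing on the region where it is positive (convexity plus $f(0)=0$ and $f$ not identically zero forces strict monotonicity once $f>0$); the part I expect to require the most care is confirming that spheres are $\nu$-null without assuming \eqref{eq:continuity}, which as indicated follows from $\sigma$-finiteness of $\nu$, so no real obstacle remains.
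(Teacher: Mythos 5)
Your overall route is exactly the one the paper intends (it proves Theorem \ref{thm:cor2} by saying ``a similar argument'' to Theorems \ref{thm:maintheorem2} and \ref{thm:cor1}): the upper bound via the splitting and $f^{-1}$, the bounded-support case via the sets $H_{\lambda,x}^f$ and $H_{\lambda,x,R}^f$ with the two-sided ball estimates from \eqref{eq:fahlfors}, and the truncation $u=u_R+v_R$ with $A_1,A_2,A_3$. All of that is correct, including your observations that $f^{-1}$ is well defined and that $f$ is unbounded.

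There is, however, one genuine error: your claim that $(\nu\otimes\nu)(H''_\lambda)=0$ follows from $\sigma$-finiteness of $\nu$. It is true that only countably many spheres $S(x_0,t)=\{y:\,d(x_0,y)=t\}$ can have positive measure, but Fubini applied to the set $D=\{(x,y):\,d(x_0,x)=d(x_0,y)\}$ gives $(\nu\otimes\nu)(D)=\int_X \nu\big(S(x_0,d(x_0,x))\big)\,d\nu(x)=\sum_{t}\nu(S(x_0,t))^2$, the sum running over those countably many radii; this is \emph{positive} as soon as a single sphere has positive measure. The exceptional set of centres $x$ lying on such spheres is itself of positive $\nu$-measure, so ``null for all but countably many radii'' does not yield ``null for $\nu$-a.e.\ $x$''. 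This is not a vacuous worry: $f$-Ahlfors regularity (unlike the doubling-in-a-length-space situation) does not force spheres to be null --- e.g.\ the Haar measure on $\mathbb{Q}_p$ is Ahlfors regular and every sphere centred at $0$ has positive measure. The gap only affects the direction $(\nu\otimes\nu)(E^f_\lambda)\le 2(\nu\otimes\nu)(H^f_\lambda)$, i.e.\ the upper bounds in \eqref{eq:limit4}; the lower bound is untouched since $(\nu\otimes\nu)(E^f_\lambda)\ge 2(\nu\otimes\nu)(H^f_\lambda)$ always holds. The clean repair is to avoid killing $H''_\lambda$ altogether: for the upper bound define $H^f_\lambda$ with the non-strict condition $d(x_0,y)\ge d(x_0,x)$, so that $E^f_\lambda$ is the union of two sets of equal measure and $(\nu\otimes\nu)(E^f_\lambda)\le 2(\nu\otimes\nu)(H^f_\lambda)$, while for the lower bound keep the strict condition; the inclusions $H^f_{\lambda,x,R}\subset H^f_{\lambda,x}\subset H^f_{\lambda,x,R}\cup B(x_0,R)$ and the resulting ball estimates go through verbatim in both versions. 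With that modification your proof is complete.
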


We now briefly discuss the optimality of the assumptions. The following example shows that, in general, one cannot relax the assumption that the measure of $X$ is infinite, as otherwise the lower bound fails (the upper bound still holds if the measure is upper Ahlfors regular).

\begin{example}\label{ex:finitemeasure}
Suppose that the metric measure space $(X,d,\nu)$ is such that $\nu(X) < \infty$. Then,
\begin{equation}
\liminf_{\lambda \rightarrow 0^+} \lambda^p (\nu \otimes \nu)(E_\lambda) \leq \lim_{\lambda \rightarrow 0^+} \lambda^p (\nu \otimes \nu)(X \times X) = 0,
\end{equation}
so the lower bound in \eqref{eq:bounds} fails.
\end{example}

The following example shows that, in general, one cannot relax the assumption of upper Ahlfors regularity and consider general metric spaces equipped with a doubling measure.

\begin{example}\label{ex:counterexample}
Consider the metric measure space 
$$(X,d,\nu) = (\mathbb{R},d_{\rm Eucl}, (1+ |x|) \mathcal{L}^1).$$
It is easy to see that this metric measure space is doubling, but it is not Ahlfors regular: an explicit computation yields
\begin{equation}
\nu(B(x,r)) = \int_{x-r}^{x+r} (1+|t|) \, dt = \threepartdef{2r + 2|x|r}{\mbox{if } \, 0 \leq r < |x|;}{x^2 + r^2 + 2r}{\mbox{if } \, 0 < |x| \leq r;}{r(r+2)}{\mbox{if } \, x = 0.}
\end{equation}
Hence, the ratio $\frac{\nu(B(x,2r))}{\nu(B(x,r))}$ is uniformly bounded, with the doubling constant $C_d = 4$. In particular, its homogeneous dimension equals $s = \log_2 4 = 2$. The measure $\nu$ also admits an asymptotic volume ratio, with exponent $s = 2$, and we have ${\rm AVR} = 1$. On the other hand, the upper bound in the definition of Ahlfors regularity fails for any $s > 0$, since the measure of the ball with a fixed radius goes to infinity as $x \rightarrow +\infty$.

Take any $p \in [1,\infty)$ and $s > 0$. Then, fixing $\lambda = 1$ and taking the sequence $u_n = \1_{[n,n+1]}$, we can estimate from below the Marcinkiewicz seminorm of $u_n$ in the following way:
\begin{align}
\bigg[ \frac{u_n(x)-u_n(y)}{|x-y|^{\frac{s}{p}}} \bigg]_{L^p_w(X \times X, \nu \otimes \nu)} \geq (\nu \otimes \nu)(E_1) =  (\nu \otimes \nu) \bigg(\bigg\{(x,y)\in \mathbb{R} \times \mathbb{R}: \, \frac{|u_n(x)-u_n(y)|}{|x-y|^{\frac{s}{p}}} \geq 1 \bigg\}\bigg),
\end{align}
and since $u$ is a characteristic function, we can further rewrite the right-hand side as
\begin{align}
(\nu \otimes \nu) &\bigg(\bigg\{(x,y)\in \mathbb{R} \times \mathbb{R}: \, \frac{|u_n(x)-u_n(y)|}{|x-y|^{\frac{s}{p}}} \geq 1 \bigg\}\bigg) \\
&\qquad\qquad= (\nu \otimes \nu) \bigg(\bigg\{ x \in [n,n+1], y \notin [n,n+1]: \, \frac{1}{|x-y|^{\frac{s}{p}}} \geq 1 \bigg\}\bigg) \\ &\qquad\qquad\qquad\qquad\qquad\qquad + (\nu \otimes \nu) \bigg(\bigg\{ x \notin [n,n+1], y \in [n,n+1]: \, \frac{1}{|x-y|^{\frac{s}{p}}} \geq 1 \bigg\}\bigg) \\
&\qquad\qquad= (\nu \otimes \nu) \bigg(\bigg\{ x \in [n,n+1], y \notin [n,n+1]: \, |x-y| \leq 1 \bigg\}\bigg) \\ &\qquad\qquad\qquad\qquad\qquad\qquad + (\nu \otimes \nu) \bigg(\bigg\{ x \notin [n,n+1], y \in [n,n+1]: \, |x-y| \leq 1 \bigg\}\bigg).
\end{align}
Therefore,
\begin{align}
(\nu \otimes \nu) &\bigg(\bigg\{(x,y)\in \mathbb{R} \times \mathbb{R}: \, \frac{|u_n(x)-u_n(y)|}{|x-y|^{\frac{s}{p}}} \geq 1 \bigg\}\bigg) \\
&\qquad\qquad\qquad\qquad\qquad \geq (\nu \otimes \nu) \bigg( \bigg[n + \frac12,n+1 \bigg] \times \bigg(n+1,n+\frac32 \bigg] \bigg) \geq \frac{n^2}{4},
\end{align}
and consequently
\begin{align}
\bigg[ \frac{u_n(x)-u_n(y)}{|x-y|^{\frac{s}{p}}} \bigg]_{L^p_w(X \times X, \nu \otimes \nu)} \geq \frac{n^2}{4}.
\end{align}
Yet, we have
\begin{equation}
\| u_n \|_{L^p(X,\nu)}^p = \int_{n}^{n+1} (1 + t) \, dt = n+\frac32 ,
\end{equation}
so there is no uniform upper bound in \eqref{eq:bounds} for this metric measure space and all $u \in L^p(X,\nu)$.
\end{example}

The final example concerns the case when the measure is Ahlfors regular, but does not admit an asymptotic volume ratio. Then, we will see that even though the lower and upper bounds are valid as given in Theorem \ref{thm:maintheorem2}, the limit of $\lambda^p (\nu \otimes \nu)(E_\lambda)$ as $\lambda \rightarrow 0$ is not well-defined.

\begin{example}\label{ex:nolimitaslambdagoestozero}
Consider the metric measure space 
$$(X,d,\nu) = (\mathbb{R},d_{\rm Eucl}, w \mathcal{L}^N),$$
with the weight $w$ defined in the following way. Let $r_0 = 0$ and $0 < m < M$. For an increasing sequence $r_n \rightarrow +\infty$ which will be specified later, define
\begin{equation}
w(x) = \twopartdef{\displaystyle m}{\mbox{on } B(0,r_n) \setminus B(0,r_{n-1}), \, n \mbox{ odd};}{M}{\mbox{on } B(0,r_n) \setminus B(0,r_{n-1}), \, n \mbox{ even}.}
\end{equation}
The measure $w \mathcal{L}^N$ is clearly Ahlfors regular (with dimension $N$), so the assumptions of Theorem \ref{thm:maintheorem2} are satisfied. Let us now pick the sequence $r_n$ in such a way that the asymptotic value ratio does not exist. Fix any $r_1 > 0$, and then for even $n$ choose $r_n$ large enough so that
\begin{equation}\label{eq:evenn1}
\frac{\displaystyle \int_{B(0,r_n)} w \, d\mathcal{L}^N}{r_n^N} \geq \bigg(M - \frac{1}{n} \bigg) \omega_N,
\end{equation}
where $\omega_N$ denotes the Lebesgue measure of the unit ball (it is enough to take $r_n \geq (M n)^{\frac{1}{N}} r_{n-1}$). Similarly, for odd $n$ we choose $r_n$ large enough so that
\begin{equation}\label{eq:oddn1}
\frac{\displaystyle \int_{B(0,r_n)} w \, d\mathcal{L}^N}{r_n^N} \leq \bigg(m + \frac{1}{n}\bigg) \omega_N
\end{equation}
(here, again one may take $r_n \geq (M n)^{1/N} r_{n-1}$). For such a sequence $r_n$, we have
\begin{equation}
\limsup_{r \rightarrow \infty} \frac{\nu(B(x,r))}{r^N} = M \omega_N > m \omega_N = \liminf_{r \rightarrow \infty} \frac{\nu(B(x,r))}{r^N},
\end{equation}
so the metric measure space does not have an asymptotic volume ratio.

Let us see that in this setting the limit $\lambda^p (\nu \otimes \nu)(E_\lambda)$ as $\lambda \rightarrow 0$ needs not be defined for any $p \geq 1$. For simplicity, take $r_1 = 1$ and assume that $r_n \geq (M n)^{\frac{1}{N}} r_{n-1} + 1$, so that the inequalities \eqref{eq:evenn1} and \eqref{eq:oddn1} hold not only for $r_n$, but also for $r_n - 1$. Setting $u = \1_{B(0,1)}$, by a direct computation we have
\begin{align}
(\nu \otimes \nu) &\bigg(\bigg\{(x,y)\in \mathbb{R}^N \times \mathbb{R}^N: \, \frac{|u(x)-u(y)|}{|x-y|^{\frac{N}{p}}} \geq \lambda \bigg\}\bigg) \\
&\qquad\qquad= (\nu \otimes \nu) \bigg(\bigg\{ x \in B(0,1), \, y \notin B(0,1): \, \frac{1}{|x-y|^{\frac{N}{p}}} \geq \lambda \bigg\}\bigg) \\ &\qquad\qquad\qquad\qquad\qquad\qquad + (\nu \otimes \nu) \bigg(\bigg\{ x \notin B(0,1), \, y \in B(0,1): \, \frac{1}{|x-y|^{\frac{N}{p}}} \geq \lambda \bigg\}\bigg) \\
&\qquad\qquad= 2(\nu \otimes \nu) \bigg(\bigg\{ x \in B(0,1), \, y \notin B(0,1): \, \frac{1}{|x-y|^{\frac{N}{p}}} \geq \lambda \bigg\}\bigg) \\
&\qquad\qquad= 2(\nu \otimes \nu) \bigg(\bigg\{ x \in B(0,1), \, y \notin B(0,1): \, |x-y| \leq \lambda^{-\frac{p}{N}} \bigg\}\bigg) \\ &\qquad\qquad= 2 \int_{B(0,1)} \nu(B(x,\lambda^{-\frac{p}{N}}) \setminus B(0,1)) \, d\nu(x).
\end{align}
We now compute the desired limit by estimating the value of the last integral. Taking a decreasing sequence $\lambda_n \rightarrow 0$ such that $r_n = \lambda_n^{-\frac{p}{N}} + 1$, for even $n \geq 2$ we~obtain
\begin{align}
\lambda_n^p (\nu \otimes \nu)(E_{\lambda_n}) &= 2 \lambda_n^p \int_{B(0,1)} \nu(B(x,\lambda_n^{-\frac{p}{N}}) \setminus B(0,1)) \, d\nu(x) \\
&\geq 2 \lambda_n^p \int_{B(0,1)} \bigg( \nu(B(0,r_n - 1)) - \nu(B(0,1)) \bigg) \, d\nu(x) \\
&= 2 m \omega_N \lambda_n^p (\nu(B(0,r_n - 1)) - \nu(B(0,1))) \\
&\geq 2 m \omega_N \lambda_n^p \bigg( \bigg(M - \frac{1}{n} \bigg) \omega_N (r_n - 1)^N - m \omega_N \bigg) \\
&= 2 m \omega_N^2 \lambda_n^p \bigg( \bigg(M - \frac{1}{n} \bigg) \lambda_n^{-p} - m \bigg).
\end{align}
Consequently, considering such a sequence $\lambda_n$ for even $n$, we get
\begin{equation}
\limsup_{\lambda \rightarrow 0^+} \lambda^p (\nu \otimes \nu)(E_\lambda) \geq 2 M m \omega_N^2.
\end{equation}
Similarly, taking a decreasing sequence $\lambda_n \rightarrow 0$ such that $r_n = \lambda_n^{-\frac{p}{N}}$, for odd $n \geq 3$ we obtain
\begin{align}
\lambda_n^p (\nu \otimes \nu)(E_{\lambda_n}) &= 2 \lambda_n^p \int_{B(0,1)} \nu(B(x,\lambda_n^{-\frac{p}{N}}) \setminus B(0,1)) \, d\nu(x) \leq 2 \lambda_n^p \int_{B(0,1)} \nu(B(0,r_n)) \, d\nu(x) \\
&= 2 m \omega_N \lambda_n^p \nu(B(0,r_n)) \leq 2 m \omega_N \lambda_n^p \bigg(m + \frac{1}{n} \bigg) \omega_N r_n^N = 2 m \omega_N^2 \bigg(m - \frac{1}{n} \bigg).
\end{align}
Therefore,
\begin{equation}
\liminf_{\lambda \rightarrow 0^+} \lambda^p (\nu \otimes \nu)(E_\lambda) \leq 2 m^2 \omega_N^2,
\end{equation}
and consequently the limit $\lambda^p (\nu \otimes \nu)(E_\lambda)$ is not well-defined.
\end{example}

\section{Particular cases}\label{sec:applications} 

This section is dedicated to presenting several applications of our main results (Theorem \ref{thm:maintheorem1} and Theorem \ref{thm:maintheorem2}). The first two are already known; for some simple choices of the metric measure space we recover the results of Gu-Yung \cite{GY} and Gu-Huang \cite{GH}. Then, we present how our main Theorems already yield new results in the setting of weighted Euclidean spaces, Carnot groups and Riemannian manifolds. For more examples of spaces satisfying the asymptotic volume ratio, to which we may apply our results, see \cite{HP}.

\subsection{Euclidean space}

For the choice $(X,d,\nu) = (\mathbb{R}^N, d_{\rm Eucl}, \mathcal{L}^N)$, we recover the original result for the weak Maz'ya-Shaposhnikova formula proved in Gu-Yung \cite{GY}. The constants coming from the application of Theorem \ref{thm:maintheorem1} (or Theorem \ref{thm:maintheorem2}) are $c_1 = c_3 = 2 \omega_N$ and $c_2 = 2^{p+1} \omega_N$, where $\omega_N$ denotes the Lebesgue measure of the unit ball. In other words, for all $u \in L^p(\mathbb{R}^N)$ we have
\begin{equation}
2 \omega_N \| u \|_{L^p(\mathbb{R}^N)}^p \leq \bigg[ \frac{u(x)-u(y)}{|x-y|^{\frac{N}{p}}} \bigg]^p_{L^p_w(\mathbb{R}^{N} \times \mathbb{R}^N)} \leq 2^{p+1} \omega_N \| u \|_{L^p(\mathbb{R}^N)}^p.
\end{equation}
Moreover, the set $E_\lambda$ takes the form
\begin{equation}
E_{\lambda}= \bigg\{(x,y)\in \mathbb{R}^N \times \mathbb{R}^N: \ \frac{|u(x)-u(y)|}{|x-y|^{\frac{N}{p}}} \geq \lambda \bigg\}
\end{equation}
and the lower bound is given by
\begin{equation}
\lim_{\lambda \rightarrow 0^+} \lambda^{p} \mathcal{L}^{2N}(E_\lambda) = 2\omega_N \| u \|_{L^p(\mathbb{R}^N)}^p.
\end{equation}

\subsection{Anisotropic norms on $\mathbb{R}^N$}

For the choice $(X,d,\nu) = (\mathbb{R}^N, \| \cdot \|, \mathcal{L}^N)$, where $\| \cdot \|$ is a norm on $\mathbb{R}^N$ whose unit ball is a convex set $K$, we recover the results obtained in Gu-Huang \cite{GH}. The~constants coming from the application of Theorem \ref{thm:maintheorem1} (or Theorem \ref{thm:maintheorem2}) are $c_1 = c_3 = 2 \mathcal{L}^{N}(K)$ and $c_2 = 2^{p+1} \mathcal{L}^{N}(K)$; in other words, for all $u \in L^p(\mathbb{R}^N)$ we have
\begin{equation}
2 \mathcal{L}^{N}(K) \| u \|_{L^p(\mathbb{R}^N)}^p \leq \bigg[ \frac{u(x)-u(y)}{\|x-y\|^{\frac{N}{p}}} \bigg]^p_{L^p_w(\mathbb{R}^{N} \times \mathbb{R}^N)} \leq 2^{p+1} \mathcal{L}^{N}(K) \| u \|_{L^p(\mathbb{R}^N)}^p.
\end{equation}
Moreover, the set $E_\lambda$ takes the form
\begin{equation}
E_{\lambda}= \bigg\{(x,y)\in \mathbb{R}^N \times \mathbb{R}^N: \ \frac{|u(x)-u(y)|}{\|x-y\|^{\frac{N}{p}}} \geq \lambda \bigg\}
\end{equation}
and the lower bound is given by
\begin{equation}
\lim_{\lambda \rightarrow 0^+} \lambda^{p} \mathcal{L}^{2N}(E_\lambda) = 2 \mathcal{L}^{N}(K) \| u \|_{L^p(\mathbb{R}^N)}^p.
\end{equation}

\subsection{Weighted Euclidean spaces}

Let $(X,d,\nu) = (\mathbb{R}^N, d_{\rm Eucl}, w \mathcal{L}^N)$, where $w \in L^\infty(\Omega)$ is a nonnegative weight. If it is bounded away from zero, i.e., $m \leq w(x) \leq M$ for $\mathcal{L}^N$-a.e. $x \in \mathbb{R}^N$, then the measure $w \mathcal{L}^N$ is Ahlfors regular with $C_a = m \omega_N$ and $C_A = M \omega_N$, where $\omega_N$ is again the Lebesgue measure of the unit ball. Therefore, the constants coming from the application of Theorem \ref{thm:maintheorem2} are $c_1 = 2 m \omega_N$ and $c_2 = 2^{p+1} M \omega_N$, and we obtain
\begin{equation}
2 m \omega_N \| u \|_{L^p(\mathbb{R}^N,w\mathcal{L}^N)}^p \leq \bigg[ \frac{u(x)-u(y)}{|x-y|^{\frac{N}{p}}} \bigg]^p_{L^p_w(\mathbb{R}^{N} \times \mathbb{R}^N, w\mathcal{L}^N \otimes w \mathcal{L}^N)} \leq 2^{p+1} M \omega_N \| u \|_{L^p(\mathbb{R}^N, w\mathcal{L}^N)}^p.
\end{equation}
Note that we could obtain a similar result from the one for Euclidean spaces and using the equivalence of the Lebesgue measure with the weighted Lebesgue measure, but applying directly Theorem \ref{thm:maintheorem2} we obtain sharper constants in these inequalities.

Assume instead that the nonnegative weight $w \in L^\infty(\Omega)$ admits a limit at infinity, i.e., the limit
\begin{equation}
w_\infty = \lim_{R \rightarrow \infty} \dashint_{B(0,R)} w \, d\mathcal{L}^N
\end{equation}
is well-defined (observe that we can take any point $x \in \mathbb{R}^N$ in place of the origin). Then, the measure $w \mathcal{L}^N$ is upper Ahlfors regular with $C_A = M \omega_N$, where $M = \| w \|_\infty$, and it satisfies the asymptotic volume ratio (in the sense of Definition \ref{dfn:avr}) with ${\rm AVR} = w_\infty \omega_N$. Condition \eqref{eq:continuity} is satisfied since $w\mathcal{L}^N \ll \mathcal{L}^N$. Therefore, the constants coming from the application of Theorem \ref{thm:maintheorem1} are $c_1 = c_3 = 2 w_\infty \omega_N$ and $c_2 = 2^{p+1} M \omega_N$, and we obtain
\begin{equation}
2 w_\infty \omega_N \| u \|_{L^p(\mathbb{R}^N,w\mathcal{L}^N)}^p \leq \bigg[ \frac{u(x)-u(y)}{|x-y|^{\frac{N}{p}}} \bigg]^p_{L^p_w(\mathbb{R}^{N} \times \mathbb{R}^N, w\mathcal{L}^N \otimes w \mathcal{L}^N)} \leq 2^{p+1} M \omega_N \| u \|_{L^p(\mathbb{R}^N, w\mathcal{L}^N)}^p.
\end{equation}
Observe that this setting does not require that the weight $w$ is bounded from below by a nonnegative number, and in particular it may vanish on subsets of $\mathbb{R}^N$.

Furthermore, we may consider the case when the weight is not necessarily absolutely continuous with respect to the Lebesgue measure, i.e., let $(X,d,\nu) = (\mathbb{R}^N, d_{\rm Eucl}, \nu)$ with $\nu$ which is a general Radon measure satisfying the assumptions of Theorem \ref{thm:maintheorem1}. As a simple example, consider a affine subspace $V \subset \mathbb{R}^N$ of dimension $k$ and $\nu = \mathcal{H}^k$, where $\mathcal{H}^k$ denotes the Hausdorff measure of dimension $k$; it is upper Ahlfors regular of dimension $k$, it satisfies \eqref{eq:continuity} and admits an asymptotic volume ratio (equal to $\omega_k$). One may also easily construct nontrival subsets $V$ of dimension $k$ which satisfy these properties.

\subsection{Carnot groups}

Let $(X,d,\nu) = (\mathbb{G}, d_{\rm cc}, \mathcal{L}^Q)$, where $\mathbb{G}$ is a Carnot group equipped with the Carnot-Carath\'eodory distance $d_{\rm cc}$ and the invariant measure $\mathcal{L}^Q$, where $Q$ is the homogeneous dimension of $\mathbb{G}$ (see the survey \cite{LeD} and the references therein). Then, the Lebesgue measure $\mathcal{L}^Q$ is Ahlfors regular with $C_a = C_A = \mathcal{L}^Q(B_{\rm cc}(0,1))$, where $B_{\rm cc}(0,1)$ is the unit ball in the Carnot-Carath\'eodory distance. Thus, the constants coming from the application of Theorem \ref{thm:maintheorem1} (or Theorem \ref{thm:maintheorem2}) are $c_1 = c_3 = 2 \mathcal{L}^Q(B_{\rm cc}(0,1))$ and $c_2 = 2^{p+1} \mathcal{L}^Q(B_{\rm cc}(0,1))$; in other words, for all $u \in L^p(\mathbb{G}, \mathcal{L}^Q)$ we have
\begin{equation}
2 \mathcal{L}^Q(B_{\rm cc}(0,1)) \| u \|_{L^p(\mathbb{G}, \mathcal{L}^Q)}^p \leq \bigg[ \frac{u(x)-u(y)}{d_{\rm cc}(x,y)^{\frac{Q}{p}}} \bigg]^p_{L^p_w(\mathbb{G} \times \mathbb{G}, \mathcal{L}^{2Q})} \leq 2^{p+1} \mathcal{L}^Q(B_{\rm cc}(0,1)) \| u \|_{L^p(\mathbb{G}, \mathcal{L}^Q)}^p.
\end{equation}
Moreover, the set $E_\lambda$ takes the form
\begin{equation}
E_{\lambda}= \bigg\{(x,y)\in \mathbb{G} \times \mathbb{G}: \ \frac{|u(x)-u(y)|}{d_{\rm cc}(x,y)^{\frac{Q}{p}}} \geq \lambda \bigg\}
\end{equation}
and the lower bound is given by
\begin{equation}
\lim_{\lambda \rightarrow 0^+} \lambda^{p} \mathcal{L}^{2Q}(E_\lambda) = 2 \mathcal{L}^Q(B_{\rm cc}(0,1)) \| u \|_{L^p(\mathbb{G}, \mathcal{L}^Q)}^p.
\end{equation}

\subsection{Riemannian manifolds}

Let $(M,g)$ be a complete Riemannian manifold of dimension $N$, and denote by $(X,d,\nu) = (M, d_{M}, {\rm Vol})$ the metric measure space where $d_{M}$ and ${\rm Vol}$ are respectively the geodesic distance and the volume density prescribed by the Riemannian metric $g$. Then, the classical Bishop-Gromov theorem implies that whenever $M$ has nonnegative Ricci curvature, for all $x \in M$ the ratio
\begin{equation}
\frac{{\rm Vol}(B(x,r))}{\omega_N r^N}
\end{equation}
is nonincreasing in $r$ ($\omega_N$ again denotes the Lebesgue measure of the unit ball in $\mathbb{R}^N$). Therefore, it has a limit as $r \rightarrow \infty$, and it is easy to see that it does not depend on the choice of $x$. Furthermore, it has limit equal to one as $r \rightarrow 0$. Thus, the measure ${\rm Vol}$ is upper Ahlfors regular of dimension~$N$, it gives no mass to spheres, and if the limit
\begin{equation}
\lim_{r \rightarrow \infty} \frac{{\rm Vol}(B(x,r))}{\omega_N r^N}
\end{equation}
is positive, it satisfies the asymptotic volume ratio. Then, we may apply Theorem \ref{thm:maintheorem1} to obtain a corresponding weak Maz'ya-Shaposhnikova formula.

\subsection{Spaces with synthetic Ricci curvature bounds} Let $(X,d,\nu)$ be a $CD(K,N)$ space, i.e., a metric measure space which satisfies the curvature-dimension condition introduced independently by Sturm \cite{Sturm,Sturm2} and Lott-Villani \cite{LV}. Assume that $K = 0$ and $N \geq 1$. In a generalised sense, it is a space with nonnegative Ricci curvature and dimension bounded from above by $N$. Then, if $\nu$ is not a Dirac measure, by a generalisation of the Bishop-Gromov theorem given in \cite[Theorem 2.3]{Sturm2}, the measure $\nu$ assigns no mass to spheres, and the ratio
\begin{equation}
\frac{\nu(B(x,r))}{r^N}
\end{equation}
is nonincreasing in $r$ (for $N = 1$, one may also take $K \leq 0$). Thus, it has a limit as $r \rightarrow \infty$, which does not depend on the choice of $x$. Then, if the limits of this quotient as $r \rightarrow 0$ and $r \rightarrow \infty$ are finite, the measure $\nu$ is respectively upper Ahlfors regular and admits an asymptotic volume ratio, and we may apply Theorem \ref{thm:maintheorem1} to obtain a corresponding weak Maz'ya-Shaposhnikova formula.

\smallskip

\noindent {\bf \flushleft Acknowledgments.} S.B. is supported by the Austrian Science Fund (FWF) projects 10.55776/F65, 10.55776/P32788 and 10.55776/ESP9, by the Belgian National Fund for Scientific Research (NFSR) project CR 40006150, and by the GNAMPA-INdAM Project 2023 ``Regolarità per problemi ellittici e parabolici con crescite non standard" (CUP\_E53C22001930001). The work of W. G\'orny was funded partially by the Austrian Science Fund (FWF), grant 10.55776/ESP88. For the purpose of open access, the authors have applied a CC BY public copyright licence to any Author Accepted Manuscript version arising from this submission.

\end{document}